\newcommand{\supp}{{\operatorname{Supp}}}
\newcommand{\cliff}{{\textrm {Cliff}}}
\newcommand{\Cliff}{{\textrm {Cliff}}}
\newcommand{\cA}{{\mathcal{A} }}
\newcommand{\cB}{{\mathcal{B}}}
\newcommand{\cK}{{\mathcal{K} }}
\newcommand{\cM}{{\mathcal{M} }}
\def\cala{\mathcal A}
\def\calb{\mathfrak B}
\def\calk{\mathcal K}
\def\call{\mathcal L}
\def\calq{\mathcal Q}
\def\bbc{\mathbb C}
\def\bbn{\mathbb N}
\def\bbr{\mathbb R}
\newcommand{\pdg}{P_d(\Gamma)}
\newcommand{\matr}[4]{
\left(
\begin{array}{cc}
#1 & #2  \\
#3 & #4
\end{array}
\right)
}
\newtheorem{thm}{THEOREM}
\newtheorem{prop}[thm]{PROPOSITION}
\newtheorem{conj}[thm]{CONJECTURE}
\newtheorem{lem}[thm]{LEMMA}
\theoremstyle{definition}
\newtheorem{defn}[thm]{{DEFINITION}}
\newcommand{\set}[1]{\{#1\}}
\newcommand{\norm}[1]{\|#1\|}
\def\rank{\operatorname{rank}}
\def\calgp{B_{alg}^p}
\def\cp{B^p}
\def\clalgp{B_{L,alg}^p}
\def\clp{B_{L}^p}
\def\lp{\ell^p}
\def\calbp{\calb_{\infty,p}}
\def\cBp{\cB}
\newcommand{\cspace}{\cp(\pdg,\cA)}
\newcommand{\clspace}{\clp(\pdg,\cA)}
\title[The coarse geometric $\lp$-Novikov Conjecture]
        {The coarse geometric $\lp$-Novikov conjecture for subspaces of non-positively curved manifolds}
\thanks{This work is supported in part by the National Natural Science Foundation of China Nos. 11771143, 11831006.}
\author{Lin Shan}
\address{Lin Shan: College of Natural Sciences\\
Department of Mathematics\\
17 University Ave. Ste 1701\\
  University of Puerto Rico\\
  San Juan PR, 00925-2537}
\email{lin.shan@upr.edu}
\author{Qin Wang}
\address{Qin Wang: Research Center for Operator Algebras \\
School of Mathematical Sciences \\
East China Normal University \\
Shanghai  200241 \\
P. R. China.}
\email{qwang@math.ecnu.edu.cn}
\date{}
\begin{document}

\begin{abstract}
In this paper, we prove the coarse geometric $\lp$-Novikov conjecture for  metric spaces with bounded geometry
which admit a coarse embedding into a simply connected complete Riemannian manifold of nonpositive sectional
curvature.
\end{abstract}

\maketitle

\baselineskip=1.5\baselineskip

\section{Introduction}

The coarse geometric Novikov conjecture \cite{HR,Yu95,Yu98,KY06} is a statement that the coarse Baum-Connes assembly map  from the coarse $K$-homology of a metric space to the $K$-theory of the Roe $C^*$-algebra, which encodes the coarse geometry of the space, is injective.
This is a geometric analogue of the strong Novikov conjecture and provides an algorithm to determine the non-vanishing problem of the
higher index of the Dirac operator on a noncompact complete
Riemannian manifold. It implies Gromov's conjecture that a uniformly contractible Riemannian manifold with bounded geometry cannot have uniformly positive scalar curvature, and the zero-in-the-spectrum conjecture stating that the Laplacian operator acting on the space of all $L^2$-forms of a uniformly contractible Riemannian manifold has zero in its spectrum.
\par
A remarkable progress was achieved by G. Yu who proved the
coarse Baum-Connes conjecture, and consequently the coarse geometric Novikov conjecture, for metric spaces with bounded
geometry which admit a coarse
embedding into a Hilbert space \cite{Yu00}. Among the main tools in \cite{Yu00} are the localization algebra of Yu \cite{Yu97}
together with the twisted Roe algebra technique. A fundamental idea underlining the approach in \cite{Yu00} is that the index of a Dirac
operator is more computable if the
Dirac operator is twisted by a family of ``almost flat Bott bundles". This approach inspires several later progresses on the coarse geometric Novikov conjecture for coarse embeddings into certain Banach spaces \cite{KY06, CWY} or non-positively curved manifolds \cite{SW}.

Recently, an $\ell^p$-analog of the coarse geometric Baum-Connes assembly map for $1<p<\infty$ was introduced in
\cite{ChungNowak}; see also \cite{ZZ}.
An important impetus behind this generalization is the unpublished work of G. Kasparov and G. Yu on
the $L^p$-Novikov and Baum-Connes conjectures (cf. \cite{Kas2013}), together with earlier works of Lafforgue's Banach $KK$-theory \cite{Lafforgue} and the discovery of G. Yu \cite{Yu05} that all Gromov hyperbolic groups, which include plenty of groups with Kazhdan's property (T), admit a proper affine isometric action on an $\ell^p$-space for some $p\geq 2$. Other similar $L^p$-assembly map has been considered in \cite{Chung} by Y. C. Chung. And closely related to these problems, rigidity and
$K$-theory of $\ell^p$-Roe-type algebras have also be studied by Y. C. Chung and K. Li \cite{ChungLi18,ChungLi19}.

The $\ell^p$-version of the geometric Novikov conjecture is a natural analog of the classical conjecture obtained by considering algebras of operators on $\ell^p$-spaces. While applications to geometry and topology have yet to be found when $p\neq 2$, there is motivation in the
coarse geometric $\ell^p$-Novikov conjecture coming from comparison with the classical case, and the intrinsic interest in comparing $K$-theories of
different completions of a given algebra.

In this paper, we shall prove the following result.
\begin{thm}\label{main}
Let $\Gamma$ be a discrete metric space with bounded geometry. If $\Gamma$ admits a coarse embedding into a simply-connected complete Riemannian manifold of non-positive sectional curvature, then the coarse geometric $\ell^p$-Novikov conjecture holds for $\Gamma$, i.e., the assembly map
$$
\mu:\lim_{d\to\infty}K_*(\clp(\pdg))\to K_*(\cp(\Gamma))
$$
is injective for all $1<p<\infty$.
\end{thm}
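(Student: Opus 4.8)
The plan is to carry out a Dirac--dual-Dirac argument for the $\ell^p$ localization and Roe algebras, twisted by a Banach coefficient algebra manufactured from the target manifold. Fix a coarse embedding $f\colon\Gamma\to M$ into the given simply connected complete non-positively curved manifold $M$; then $M$ is finite-dimensional, $n:=\dim M$, the exponential map $\exp_o\colon T_oM\to M$ at a basepoint $o$ is a diffeomorphism, and the distance function of $M$ is convex. Since $M$ is contractible, $TM\cong M\times\mathbb{R}^n$, so one can form a coefficient algebra $\cA$ --- an $\ell^p$-operator-algebra analogue of the Higson--Kasparov--Trout Clifford algebra of $TM$, roughly $C_0(M)\,\widehat{\otimes}\,\mathcal S(\mathbb{R}^n)$ with $\mathcal S(\mathbb{R}^n)=C_0(\mathbb{R}^n,\Cliff(\mathbb{R}^n))$, completed in the relevant $\ell^p$ norm and stabilized by the compact operators on $\ell^p$. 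With $\cA$ in hand one forms twisted $\ell^p$ Roe and localization algebras $\cspace$ and $\clspace$: $\ell^p$-operators over $\pdg$ with coefficients in $\cA$, of uniformly bounded propagation, locally $\cA$-compact, and subject to a support condition coupling the $(x,y)$-entry to the part of $\cA$ localized near $f(x),f(y)\in M$; bounded geometry of $\Gamma$ is what makes these algebras well defined. The family of vectors $v_x:=\exp_{f(x)}^{-1}(o)\in T_{f(x)}M\cong\mathbb{R}^n$ $(x\in\Gamma)$ is uniformly controlled in $x$ by non-positive curvature (via the Cartan--Hadamard comparison), hence yields a position-dependent Bott element of $\cA$ and, by multiplication, compatible Bott maps $\beta_L$ and $\beta$ fitting into a commuting square
\[
\begin{array}{ccc}
\lim_{d\to\infty}K_*(\clp(\pdg)) & \xrightarrow{\ e_*\ } & K_*(\cp(\Gamma))\\[2mm]
\downarrow & & \downarrow\\[2mm]
\lim_{d\to\infty}K_*(\clspace) & \xrightarrow{\ e_{\cA,*}\ } & K_*(\cp(\Gamma,\cA))
\end{array}
\]
whose vertical arrows are $\beta_L$ (left) and $\beta$ (right).

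The first --- and hardest --- step is to prove that the twisted evaluation map $e_{\cA,*}$ is an isomorphism; for the theorem, only injectivity is needed. This is a ``coarse geometric $\ell^p$-Novikov conjecture with coefficients in $\cA$'', and I would establish it by a controlled Mayer--Vietoris induction: using $f$, decompose $\Gamma$ along the geodesic structure of $M$ into uniformly controlled pieces, invoke the resulting short exact sequences of twisted $\ell^p$ Roe (and localization) algebras together with their six-term $K$-theory sequences, and reduce to the case of a coarsely bounded space. Non-positive curvature enters decisively: the geodesic contraction $H_t(q)=\exp_o\!\big(t\,\exp_o^{-1}(q)\big)$ is $t$-Lipschitz as a self-map of $M$ for every $t\in[0,1]$ by convexity of the distance, so it is a uniformly controlled homotopy of $M$ onto $o$; transported to $\cA$, it trivializes the $K$-theory of the ``local'' pieces, so that the bounded case collapses to a point computation. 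The genuine obstacle here is analytic rather than geometric: one must develop, for $\ell^p$-operator algebras (which are not $C^*$-algebras), the excision and six-term exactness needed to run the induction --- checking that the relevant ideal/quotient sequences of twisted $\ell^p$ Roe algebras are admissible, that $\ell^p$-stabilization is compatible with $K$-theory, and that the subalgebras in play (the algebraic versions $\calgp$, $\clalgp$ and their twists) are holomorphically closed where this is invoked --- and then thread the contraction $H_t$ through the induction with bounded-geometry-uniform estimates.

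The second step is to show that $\beta_L$ is split injective. For this I would build a companion ``local Dirac'' map $\alpha_L$ in the reverse direction --- the $\ell^p$ analogue, on the localization algebra, of the Dirac operator of $M$ --- and prove $\alpha_L\circ\beta_L=\mathrm{id}$. Because the localization algebra remembers only the fine-scale structure (propagation tending to $0$), this composition identity is a purely local statement and, via a cover by sets of uniformly bounded diameter together with uniform estimates, it reduces to Bott periodicity for $\mathbb{R}^n$ and $\Cliff(\mathbb{R}^n)$. This is exactly where having a finite-dimensional target manifold, rather than an arbitrary Hilbert space, is essential: only $\dim M$-dimensional Bott periodicity is required, and that finite-dimensional fact is available for $\ell^p$-operator algebras (in the spirit of the known $p$-independence of the $K$-theory of $L^p$ Roe algebras of finite-asymptotic-dimension spaces), whereas the infinite-dimensional Bott periodicity that drives the Hilbert-space case has no known $\ell^p$ analogue.

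Finally I would assemble the pieces. In the square above $\beta\circ e_*=e_{\cA,*}\circ\beta_L$; since $\beta_L$ is split injective by the second step and $e_{\cA,*}$ is injective by the first, the composite $e_{\cA,*}\circ\beta_L$ is injective, hence so is $e_*$, which is precisely Theorem~\ref{main}. In summary, the main obstacle is the twisted isomorphism $e_{\cA,*}$: erecting the Mayer--Vietoris and six-term $K$-theory machinery for twisted $\ell^p$-operator algebras and pushing the non-positively curved geodesic contraction through it with uniform control; by comparison, the second step is essentially a finite-dimensional input and the concluding diagram chase is formal.
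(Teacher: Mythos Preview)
Your overall architecture---commuting square with Bott maps and evaluations, the two main steps, the concluding diagram chase---matches the paper's, and you are right that finite $\dim M$ is what makes $\ell^p$ Bott periodicity available. But you misplace where non-positive curvature enters, and this creates genuine gaps. First, your Bott construction: $v_x:=\exp_{f(x)}^{-1}(o)$ is a single tangent vector of unbounded norm, not an element of $\cA$. What the paper uses is, for each $x$, the normalized radial \emph{vector field} $z\mapsto -\exp_z^{-1}(f(x))/\|\exp_z^{-1}(f(x))\|$ on $M$, cut off near $f(x)$, giving idempotents $b_{f(x),c}\in \cM_2(\cA^+)$. Non-positive curvature is used precisely here---through the comparison inequality $\|\exp_z^{-1}(p)-\exp_z^{-1}(q)\|\le d(p,q)$---to prove these Bott idempotents are \emph{uniformly almost flat}: $\sup_{z}\|b_{f(x),c}(z)-b_{f(y),c}(z)\|<\varepsilon$ whenever $d(x,y)<R$. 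This almost-flatness is what allows $\beta_*$ to be defined on the Roe algebra at all, since entrywise multiplication of a finite-propagation almost-idempotent by the family $\{b_{f(x),c}\}$ must again yield an almost-idempotent.

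Second, your proof of the twisted evaluation isomorphism does not work as stated: the geodesic contraction $H_t$ of $M$ drags $\supp(T(x,y))$ out of its mandated neighbourhood $B(f(x),r)$, so $H_t$ does not act on the twisted algebra. The paper's argument is quite different and uses no curvature hypothesis: one decomposes the twisted algebras over open sets $O_r=\bigcup_\gamma B(f(\gamma),r)\subset M$, refines by bounded geometry into finitely many $(\Gamma,r)$-separate families, over each of which the twisted algebra splits as a product $\prod_i \cp(X_i)\otimes\cA_{O_i}$, and then contracts the pieces $X_i\subset\pdg$ (balls and simplices in the Rips complex, not pieces of $M$) to points by strong Lipschitz homotopy invariance. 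Likewise, $(\beta_L)_*$ is shown to be an isomorphism directly by Mayer--Vietoris on the skeleta of $\pdg$ and pointwise Bott periodicity, without constructing a Dirac map $\alpha_L$.
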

Recall that a map $f:X\to Y$ for from a metric space
$X$ to another $Y$  is said to be a {\em coarse embedding} \cite{Grom} if there exist non-decreasing functions $\rho_1$ and $\rho_2$
from $\mathbb R_+=[0,\infty)$ to $\mathbb R$ with
$\lim_{r\to\infty}\rho_i(r)=\infty$ for $i=1,2$, such that
$$\rho_1(d(x,y))\leq d(f(x),f(y))\leq \rho_2(d(x,y))$$
 for all $x, y\in X$.
The above assembly map $\mu$ is induced by the evaluation-at-zero map $e$ from the localization $\ell^p$ algebra $\clp(\pdg)$ of the
Rips complex of $\Gamma$ to the $\ell^p$-Roe algebra $\cp(\Gamma)$ of $\Gamma$. The definition of the $\ell^p$-assembly map is motivated by
the result of G. Yu in \cite{Yu97} that the local index map from $K$-homology to the $K$-theory of the localization algebra is an
isomorphism for a finite dimensional
simplicial complex. Due to the local nature, it can be shown (cf. \cite{ZZ}) that the $K$-theory of the $\ell^p$-localization algebras
$\clp(\pdg)$ is independent of the choice of $1<p<\infty$. Therefore, the left-hand side of the assembly map $\mu$ in Theorem 1 is
isomorphic to the classical coarse $K$-homology of the space $\Gamma$.

The proof of Theorem 1 is again based on the fundamental idea and tools in \cite{Yu00} of G. Yu by using localization algebra technique and an $\ell^p$-version of the twisted Roe algebras and $\ell^p$-Bott maps. We closely follow our previous work \cite{SW} in the classical $p=2$ case,
with necessary technical adjustments.

It should be noted that techniques used in the $C^*$-algebraic setting often do not transfer to the $L^p$-setting in a straightforward manner. This is due to the more complicated geometry of $L^p$-spaces, including the fact that they are not reflexive unless $p=2$. For instance, while for any closed two-sided ideals $I, J$ in a $C^*$-algebra $A$ we always have $I\cap J=IJ$
(this general fact is frequently used to establish the $K$-theory Mayer-Vietoris exact sequences),
this equality may not hold in an arbitrary $L^p$-operator algebra (as a clue, consider $\mathbb{C}$ with its usual norm and the trivial product given
by $xy=0$ for all $x, y\in \mathbb{C}$). In general, an $L^p$-operator algebra need not have a (contractive, one-sided) approximate identity. However, we will show that closed ideals in the $\ell^p$-Roe algebras or the twisted $\ell^p$-Roe algebras
supported on subspaces of the metric space $\Gamma$ or open subsets of the manifold $M$ do admit contractive approximate units. This allows us to
establish $K$-theory Mayer-Vietoris sequences for the $\ell^p$-Roe algebra and the twisted $\ell^p$-Roe algebra.

Another subtle issue is about tensor products associated with $L^p$-spaces. In general,
the tensor product $T\otimes S$ of a bounded operator $T$ on $L^p(\mu)$  and a bounded operator $S$ on a Banach space $E$ may not extend
to a bounded operator on the ``natural tensor product" $L^p(\mu)\otimes_{p} E$, unless for example $E=L^p(\nu)$ is another $L^p$-space,
in which case $\|T\otimes S\|=\|T\|\|S\|$.
This suggests us to view the algebra $\cA=C_0(M,\cliff_{\bbc}(TM))$ in the construction of the twisted $\ell^p$-Roe algebra and Bott elements
in $K$-theory as an $L^p$-operator algebra. Since the Clifford bundle $\cliff_{\bbc}(TM))$ is finite dimensional, one would naturally like to
regard it as an $\ell^p$-space bundle so that the algebra $\cA=C_0(M,\cliff_{\bbc}(TM))$ could act on the $L^p$-space $L^p(M,\cliff_{\bbc}(TM))$
of the $\ell^p$-space bundle $\cliff_{\bbc}(TM))$.
However, since the $\ell^p$-norm on a tangent space $T_x M$ depends on the choice of the (orthonormal) basis
of $T_xM$, if $M$ is not flat we can not end up with a consistent $\ell^p$-structure on the tangent bundle $TM$ or the Clifford bundle
$\cliff_{\bbc}(TM))$, which is needed for the
construction of ``the family of unformly almost flat Bott elements" on $M$. To solve this confliction, we will view
$\cA=C_0(M,\cliff_{\bbc}(TM))$ acting on $L^p(M,\cliff_{\bbc}(TM))$ which is the $L^p$-space of locally measurable sections of
{\em the Hilbert space bundle $\cliff_{\bbc}(TM))$}. It turns out that
the tensor product $T\otimes S$ of a bounded operator $T$ on $\ell^p$  and a bounded operator $S$ on
$L^p(M,\cliff_{\bbc}(TM))$, regarded as the $L^p$-space of the Hilbert space bundle $\cliff_{\bbc}(TM))$, still extends
to a bounded operator on the ``natural tensor product" $\ell^p \otimes_p L^p(M, \cliff_{\bbc}(TM))$
and satisfies $\|T\otimes S\|=\|T\|\|S\|$.

The paper is organized as follows. In section 2 we recall the $\ell^p$-Roe algebra, $\ell^p$-localization algebras and the coarse geometric
$\ell^p$-Novikov conjecture. In section 3, we study approximate units for an ideal of the $\ell^p$-Roe algebra supported on a subspace and
present an $\ell^p$-coarse Mayer-Vietoris principle. In section 4, we first discuss certain measure theory aspect of
the $L^p$-space $L^p(M, \cliff_{\bbc}(TM))$ of the Hilbert space bundle $\cliff_{\bbc}(TM)$ and the natural tensor norm $\otimes_p$, so as to
view $\cA=C_0(M,\cliff_{\bbc}(TM))$ as an $L^p$-operator algebra. Then we define the twisted $\ell^p$-Roe algebra and its
localization counterpart and discuss how to use ideals supported on separate open subsets of $M$ to
show that the evaluation map induces an isomorphism for twisted algebras. In section 5, we adapt Yu's arguments about strong Lipschitz homotopy
invariance to the $\ell^p$-setting. In section 6, we construct families of uniformly almost flat Bott generators to establish a Bott
map $\beta$ from the $K$-theory of the $\ell^p$-Roe
algebra to the $K$-theory of the twisted $\ell^p$-Roe algebra, and a Bott map $\beta_L$ between the corresponding $\ell^p$-localization algebras.
In section 7, we complete the proof of the main theorem of this paper.

\vskip 10 mm

\section{The coarse geometric Novikov conjecture}
In this section, we shall recall the concepts of the $\lp$-Roe algebra \cite{Roe93,ChungNowak}, Yu's $\lp$-localization algebras \cite{Yu97, ChungNowak} and the coarse geometric $\ell^p$-Novikov conjecture \cite{ChungNowak}.

Let $X$ be a proper metric space. Recall that the space $X$ is called {\em proper} if every closed ball is compact. When $X$ is discrete, we say that $X$ has {\em bounded geometry} if for any $R>0$, there exists $N_R>0$ such that for any $x\in X$ the cardinality $|B(x; R)|$ is less than or equal to $N_R$. For $r>0$, an $r$-net in $X$ is a discrete subset $Y\subset X$ such that for any $y_1,y_2\in Y$, $d(y_1,y_2)\geq r $ and for any $x\in X$ there is a $y\in Y$ such that $d(x,y)\leq r$. A general metric space $X$ is called to have bounded geometry if $X$ has an $r$-net $Y$ for some $r>0$ such that $Y$ has bounded geometry.

Throughout this paper, we assume $p>1$, and denote by $\calk_p=\calk(\ell^p)$, the Banach algebra of all compact operators on $\ell^p$.

\begin{defn}[\cite{Roe93,ChungNowak}]
Let $X$ be a proper metric space, and fix a countable dense subset $Z\subseteq X$. Let $T$ be a bounded operator on $\lp(Z,\lp)$, and write $T=(T(x,y))_{x,y\in Z}$ so that each $T(x,y)$ is a bounded operator on $\lp$. The operator $T$ is said to be {\em locally compact} if
\begin{itemize}
\item each $T(x,y)$ is a compact operator on $\lp$;
\item for every bounded subset $B\subseteq X$, the set
$$
\set{(x,y)\in(B\times B)\cap(Z\times Z):T(x,y)\neq 0}
$$
is finite.
\end{itemize}
The {\it propagation} of $T$ is defined to be
$$
propagation(T)=\inf\set{S>0:T(x,y)=0\text{ for all }x,y\in Z\text{ with }d(x,y)>S}.
$$
The {\it algebraic $\lp$ Roe algebra} of $X$, denoted by $\calgp(X)$, is the subalgebra of $\call(\lp(Z,\lp))$ consisting of all finite propagation, locally compact operators. The {\it $\lp$ Roe algebra} of $X$, denoted by $\cp(X)$, is the closure of $\calgp(X)$ in $\call(\lp(Z,\lp))$.
\end{defn}

Up to non-canonical isomorphisms, $\cp(X)$ does not depend on the choice of the dense subspace $Z$, while up to canonical isomorphism, its $K$-theory does not depend on the choice of $Z$. The proof in \cite{HRY} for $p=2$ works well for general $p>1$.

\begin{defn}[\cite{Yu97}]  The {\em $\lp$-localization algebra} $\clp(X)$ is the norm-closure of the algebra of all
bounded and uniformly norm-continuous functions $g: \; [0,\infty)\to \cp(X)$ such that
$$
propagation(g(t))\to 0\quad\mbox{as}\quad t\to \infty.
$$
\end{defn}

The evaluation homomorphism $e$ from $\clp(X)$ to $\cp(X)$ is defined by
$$e(g)=g(0)$$
for all $g\in \clp(X)$.

\begin{defn}[\cite{WY}]
Let $\Gamma$ be a locally finite metric space. Let $d\geq 0$. The {\em Rips complex of $\Gamma$ at scale $d$}, denoted by $P_d(\Gamma)$, is the simplicial complex with vertex set $\Gamma$ where a subset $\set{\gamma_0,\cdots,\gamma_n}$ of $\Gamma$ spans a simplex if and only if $d(\gamma_i,\gamma_j)\leq d$ for all $i,j$. Write points $x$ in such a simplex $\sigma_{\set{\gamma_0,\cdots,\gamma_n}}$ of $P_d(\Gamma)$ as formal linear combinations:
$$
x=\sum_{i=0}^n t_i \gamma_i,
$$
where each coefficient $t_i$ is in $[0,1]$, and $\sum_{i=0}^n t_i=1$.  Let $\mathrm{S}(\mathbb{R}^{n+1})$ be the sphere in the Euclidean space $\mathbb{R}^{n+1}$, and define a bijection from the simplex $\sigma_{\set{\gamma_0, \cdots, \gamma_n }}$ to $\mathrm{S}(\mathbb{R}^{n+1})$ via the map
$$
\rho: x=\sum_{i=0}^n t_i \gamma_i \mapsto \Big( \sum_{i=0}^n t_i^2 \Big)^{-\frac{1}{2}} (t_0, \cdots, t_n).
$$
The spherical metric on $\sigma\set{\gamma_0, \cdots, \gamma_n }$ is the metric defined by
$$
d_\sigma(x, y):=\frac{2}{\pi} \arccos \Big( \Big \langle \rho(x), \rho(y) \Big \rangle \Big),
$$
i.e. the length (normalized by $2/\pi$) of the shorter arc of a great circle connecting $\rho(x)$ and $\rho(y)$.

For points $x, y\in P_d(\Gamma)$, a {\em simplicial path} $\gamma$ (cf. \cite{WY}) between them is a finite sequence $x=x_0, x_1, \cdots, x_n=y$ of points
in $P_d(\Gamma)$ together with a choice of simplices $\sigma_1, \cdots, \sigma_n$ such that each $\sigma_i$ contains $(x_{i-1}, x_i)$. The length of
such a path $\gamma$ is defined to be
$$
l(\gamma):=\sum_{i=1}^n d_{\sigma_i}(x_{i-1}, x_i),
$$
and the {\em spherical distance} between two arbitrary points $x, y\in P_d(\Gamma)$ is defined to be
$$
d_{S}(x, y):=\inf \Big\{ l(\gamma): \gamma \mbox{ a simplicial path between } x \mbox{ and } y \Big\}
$$
and $d_S(x, y)=\infty$ if no simplicial path exists.

A {\em semi-simplicial path} $\delta$  (see Definition 7.2.8 in \cite{WY}) between points $x$ and $y$ in $P_d(\Gamma)$ consists of a sequence of the form
$$x=x_0, y_0, x_1, y_1, x_2, y_2, \cdots, x_n, y_n=y,$$
where each of the points $x_i$ and $y_i$ are in $\Gamma$ and {\em some of these points may be repeated}. The {\em length} of such a path is defined as
$$
\ell(\delta):=\sum_{i=0}^n d_S(x_i, y_i) + \sum_{i=0}^{n-1} d_{\Gamma} (y_i, x_{i+1}).
$$
The {\em semi-spherical distance} on $P_d(\Gamma)$ is defined by
$$
d_{P_d}(x, y):= \inf \Big\{ \ell(\delta) \Big| \; \delta \mbox{ a semi-simplicial path between } x \mbox{ and } y \Big\}
$$
Note that a semi-simplicial path between two points always exists.

The {\em Rips complex of $\Gamma$} is defined to be the space $P_d(\Gamma)$ equipped with the metric $d_{P_d}$ above.

\end{defn}

It turns out that (see \cite{WY}, Proposition 7.2.11) (1) $P_0(\Gamma)$ identifies isometrically with $\Gamma$; (2) for any $d\geq 0$ the Rips complex $P_d(\Gamma)$ is a proper, second
countable metric space; (3) for each $d'\geq d\geq 0$ the canonical inclusion $i_{d'd}: P_d(\Gamma)\to P_{d'}(\Gamma)$ is a coarse equivalence, and a homeoporphism onto its image.

To define the assembly map, we recall that when $p=2$, Yu in \cite{Yu97} proved that the local index map from $K$-homology to $K$-theory of localization algebra is an isomorphism for a finite-dimensional simplicial complex. Y. Qiao and J. Roe in \cite{QR} later generalize this isomorphism to general locally compact metric spaces. Therefore, for $p\in(1,\infty)$, considering the analogs of $\lp$-Roe algebra and $\lp$-localization algebra, we define the evaluation at zero map as the assembly map, which is equivalent to the original index map when $p=2$. The following conjecture is called {\bf the coarse geometric $\ell^p$-Novikov conjecture}:

\begin{conj} If $\Gamma$ is a discrete metric space with bounded geometry, then the assembly map
$$
\mu:=e_*: \lim_{d\to \infty} K_*(\clp(P_d(\Gamma))) \to \lim_{d\to\infty} K_*(\cp(P_d(\Gamma)))\cong
K_*(\cp(\Gamma))
$$
is injective, where $1<p<\infty$.
\end{conj}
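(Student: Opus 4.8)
As worded, this is an open \emph{conjecture}: it asserts injectivity of the index map for \emph{every} bounded-geometry discrete metric space, and no argument is known that dispenses with a geometric hypothesis on $\Gamma$. Accordingly I describe the program through which the known cases --- in particular the one established in this paper --- are obtained, and I indicate where it breaks down in general.

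\textbf{Dirac--dual-Dirac with coefficients.} The plan is to feed a coarse embedding $f\colon\Gamma\to M$ into a suitable ``model space'' $M$ and build a twisted version of the index problem. One (i) constructs a coefficient algebra $\cA$ --- morally ``$C_0(M)$ with values in the compact operators on $\lp$, localized along the $f$-images of bounded sets'' --- together with the twisted $\lp$-Roe algebra $\cspace$, the twisted $\lp$-localization algebra $\clspace$, and their evaluation homomorphism $e$; (ii) produces from $f$ and a Bott/Dirac element attached to $M$ a Bott homomorphism $\beta$ carrying $\cp(\pdg)$ and $\clp(\pdg)$ into $\cspace$ and $\clspace$; (iii) proves that the twisted evaluation $e_*\colon \lim_d K_*(\clspace)\to\lim_d K_*(\cspace)$ is an \emph{isomorphism}; and (iv) assembles (ii) and (iii) in the commuting square relating the twisted and untwisted evaluation maps, deducing injectivity of the untwisted $e_*$ from the fact that $\beta$ is split injective on $K$-theory (a dual-Dirac map $\alpha$ with $\alpha\circ\beta$ equal to the identity up to a periodicity element).

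\textbf{The crux.} Step (iii) is where the geometry of $M$ enters and where the real work lies. One cuts $M$ into controlled pieces along a ``coordinate'' direction (a coarse Mayer--Vietoris decomposition), evaluates the twisted $K$-theory over each piece by a strong Lipschitz-homotopy argument --- the piece is contracted to a point along a canonical geodesic homotopy whose propagation tends to $0$ --- and reassembles the global statement by a five-lemma argument together with an induction on the scale $d$. In the $\lp$ setting the classical Kasparov $KK$-theoretic form of Bott periodicity is not directly available, so $\beta$, $\alpha$ and all the homotopies must be produced explicitly at the level of idempotents and invertibles in the relevant Banach algebras (or through an $E$-theory-type substitute).

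\textbf{The obstacle.} Everything rests on finding a target $M$ that is simultaneously (a) large enough to receive a coarse embedding of $\Gamma$ and (b) ``$K$-theoretically small'', in the sense that it carries a Bott/Dirac element and a propagation-decreasing contraction compatible with the Banach-algebraic framework, so that step (iii) goes through. For an arbitrary bounded-geometry $\Gamma$ no such $M$ is known --- this is exactly the gap that keeps the conjecture open. The present paper resolves it whenever $M$ may be taken to be a simply connected complete Riemannian manifold of nonpositive sectional curvature: the exponential map furnishes global geodesic coordinates, hence an explicit contraction of $M$ to a point and a finite-dimensional cutting-and-pasting, which is precisely what steps (i)--(iii) demand.
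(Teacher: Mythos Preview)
You are right that the statement, as worded, is an open conjecture, and your outline of the strategy for the known cases---twist by a coefficient algebra built from the target $M$, prove the twisted evaluation is an isomorphism, then use a Bott map to transfer back---matches the paper's proof of its main theorem. Two points deserve correction, however.

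First, and most substantively: the paper does \emph{not} produce a dual-Dirac element $\alpha$ with $\alpha\circ\beta=\mathrm{id}$. Instead it proves directly that the localization-level Bott map $(\beta_L)_*\colon K_*(\clp(\pdg))\to K_*(\clspace)$ is an \emph{isomorphism}, by a Mayer--Vietoris induction on the skeleta of the Rips complex that reduces to the $0$-skeleton, where it is classical Bott periodicity at each point. The commuting square $\beta_*\circ e_*=e_*\circ(\beta_L)_*$ then has its right-hand side a composite of two isomorphisms (the twisted $e_*$ and $(\beta_L)_*$), forcing the untwisted $e_*$ to be injective. This avoids constructing any Dirac element, which would be delicate in the $\ell^p$ framework; what you describe is the more classical $C^*$-algebraic route and would also work in principle, but is not what the paper does.

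Second, two smaller inaccuracies: the coefficient algebra is $\cA=C_0(M,\Cliff_{\bbc}(TM))$, the sections of the Clifford bundle, not ``$C_0(M)$ with values in compacts on $\ell^p$''; the $\ell^p$ structure enters only through the representation module. And the Mayer--Vietoris cutting in step~(iii) is a decomposition of $M$ into finitely many $(\Gamma,r)$-separate families of balls (using bounded geometry of $\Gamma$ and the coarse embedding), not an ``induction on the scale $d$''---the limit over $d$ is taken separately and commutes with the limit over the radius $r$.
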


\vskip 10mm

\section{An $\ell^p$ coarse Mayer-Vietoris principle}
In this section, we present an $\ell^p$ coarse Mayer-Vietoris principle similar to the argument in \cite{HRY}.

\begin{defn}[\cite{HRY}]
Let $X$ be a proper metric space, and let $A$ and $B$ be closed subspaces with $X=A\cup B$. We say that $(A,B)$ is an $\omega$-excisive pair, or that $X=A\cup B$ is an $\omega$-excisive decomposition, if for each $R>0$ there is some $S>0$ such that
$$
\mathrm{Pen}(A;R)\cap \mathrm{Pen}(B;R)\subset \mathrm{Pen}(A\cap B;S),
$$
where $\mathrm{Pen}(A;R)=\{y\in X |\; d(y, A)\leq R\}$ is the $R$-neighborhood of $A$ in $X$.
\end{defn}

\begin{defn}[\cite{HRY}]
Let $A$ be a closed subspace of a proper metric space $X$. Denote by $\cp(A;X)$ the operator-norm closure of the set of all locally compact, finite propagation operators $T$ on $\ell^p(Z, \ell^p))$ whose support is contained in $\mathrm{Pen}(A;R)\times \mathrm{Pen}(A;R)$, for some $R>0$ depending on $T$.
\end{defn}

One can see that $\cp(A;X)$ is a two-sided ideal of $\cp(X)$. For $s,t\in [0,\infty)$ with $s<t$, the inclusion $\mathrm{Pen}(A;s)\to \mathrm{Pen}(A;t)$ induces an inclusion map
$$
i_{t,s}:\cp(\mathrm{Pen}(A;s))\to \cp(\mathrm{Pen}(A;t)).
$$
It follows that $\cp(A;X)=\lim_{n\to\infty} \cp(\mathrm{Pen}(A;n))$ and we get an induced map
$$
i: B^p(A) \to B^p(A; X).
$$

\begin{lem}[\cite{HRY}]
The induced map at $K$-theory level
$$
i_*:K_*(\cp(A))\to K_*(\cp(A;X))
$$
is an isomorphism.
\end{lem}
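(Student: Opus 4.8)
The plan is to exploit continuity of $K$-theory together with the coarse invariance of the $\ell^p$ Roe algebra, following the scheme of \cite{HRY}. Recall from the construction preceding the statement that $\cp(A;X)=\varinjlim_n\cp(Pen(A;n))$, the Banach algebra inductive limit along the norm-decreasing connecting maps $i_{n,n+1,*}$, and that $i_*$ is the map on $K$-theory induced by the canonical homomorphism $\cp(A)=\cp(Pen(A;0))\to\cp(A;X)$. Since $K$-theory is continuous, $K_*(\cp(A;X))\cong\varinjlim_n K_*(\cp(Pen(A;n)))$ and $i_*$ is identified with the canonical map of $K_*(\cp(Pen(A;0)))$ into this inductive limit. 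Hence it suffices to prove that each connecting map $i_{n,n+1,*}$ induces an isomorphism on $K$-theory; the limit is then attained already at the first stage, and $i_*$ is an isomorphism.

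To see this, note that the inclusion $j_n\colon Pen(A;n)\hookrightarrow Pen(A;n+1)$ is an isometric embedding whose image is $(n+1)$-dense, since every point of $Pen(A;n+1)$ lies within distance $n+1$ of $A\subseteq Pen(A;n)$; thus $j_n$ is a coarse equivalence. Unwinding the definition, $i_{n,n+1,*}$ is the corner embedding $\cp(Pen(A;n))=E_n\,\cp(Pen(A;n+1))\,E_n\hookrightarrow\cp(Pen(A;n+1))$, where $E_n$ denotes the coordinate projection of $\ell^p(Pen(A;n+1),\calk_p)$ onto $\ell^p(Pen(A;n),\calk_p)$. The claim thus reduces to the $\ell^p$ analogue of the classical fact (the case $p=2$ is standard) that the $K$-theory of the Roe algebra is a coarse invariant of a bounded geometry metric space, in the precise form that the corner embedding cut by a penumbra is a $K$-theory isomorphism. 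To prove this one passes, via $r$-nets, to the case of discrete bounded geometry spaces, fixes a coarse retraction $\phi\colon Pen(A;n+1)\to Pen(A;n)$ with $\sup_x d(x,\phi(x))<\infty$ and $\phi|_{Pen(A;n)}=\mathrm{id}$, and uses that bounded geometry forces the fibres $\phi^{-1}(y)$ to be finite of uniformly bounded cardinality; pushing forward along $\phi$ and including back are then controlled maps of $\ell^p$-modules that are mutually inverse up to the relevant corners, and the stabilization $\calk_p$ built into the Roe algebra permits an Eilenberg-swindle-type argument promoting the corner embedding to a $K$-equivalence.

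The main obstacle is exactly this last step. For $p=2$ it is immediate from Morita invariance of $C^*$-algebras, but that machinery is unavailable for general $p$: one has no $C^*$-module calculus, so the requisite idempotents and homotopies must be produced by hand inside suitable matrix amplifications of $\cp(Pen(A;n+1))$, carrying explicit propagation bounds through each step and invoking stability of the $\ell^p$ Roe algebra in the form $\cp(Y)\cong\cp(Y)\otimes\calk_p$. Once this is established---and it can be carried out within the $\ell^p$ framework of \cite{CN,ZZ}---the remaining ingredients, namely coarse density of the penumbras, continuity of $K$-theory, and the identification of $i_*$ with the canonical map into the inductive limit, are purely formal, and the lemma follows.
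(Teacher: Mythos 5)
Your proposal is correct and takes essentially the same approach as the paper. The paper's proof is a one-liner citing that the inclusions $A\subset Pen(A;n)\subset Pen(A;n+1)$ are coarse equivalences and hence induce $K$-theory isomorphisms; your argument spells out exactly that, via continuity of $K$-theory along the inductive limit defining $\cp(A;X)$ together with coarse invariance of $K_*(\cp(\cdot))$, with the extra discussion of how to establish coarse invariance in the $\ell^p$ setting being elaboration rather than a genuinely different method.
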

\begin{proof}
Since the inclusions $A\subset \mathrm{Pen}(A;n)$ and $\mathrm{Pen}(A;n)\subset \mathrm{Pen}(A;n+1)$ are coarse equivalence, the induced maps on $K$-theory are all isomorphisms.
\end{proof}

Let $A$ be a closed subspace of $S$ and consider the ideal $\cp(A;X)$ of $\cp(X)$. Define idempotents $Q: \, X\times X\to \calk_p$ by the formula:
\begin{align*}
& Q(x,y)=0\text{ if } x\neq y, \\
& Q(x,x)=\left(\begin{array}{cc}
                   I_{r(x)} & 0 \\
                   0 & 0 \\
                 \end{array}
               \right),
\end{align*}
where $I_{r(x)}$ is the $r(x)\times r(x)$ identity matrix for some $r(x)\in\bbn$,  and
$$\mathrm{Supp} ( r(x) )\subset \mathrm{Pen}(A;R) $$
for some $R\geq 0$.
We define a partial order on all such idempotents $Q$ by the following: $Q_2\leq Q_1$ if
$
\rank(Q_2(x,x))\leq \rank(Q_1(x,x))$ for all $x\in X.$

Let $\calq$ be the set of all such operators $Q$ with this order.

\begin{prop}\label{approximateunit}
The collection $\calq$ is an approximate unit of $\cp(A;X)$.
\end{prop}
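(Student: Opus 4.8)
The plan is to check the two defining properties of an approximate unit for $\calu$ with the given order: that $\calu$ is upward directed, and that for every $T\in I$ and every $\varepsilon>0$ there is some $U\in\calu$ with $\norm{UT-T}<\varepsilon$ and $\norm{TU-T}<\varepsilon$, these estimates persisting for all larger members of $\calu$.

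Directedness is immediate: given $U_1,U_2\in\calu$ with rank functions $r_1,r_2$, the operator $U_3$ with rank function $r_3(x)=\max\{r_1(x),r_2(x)\}$ again lies in $\calu$ and dominates both. Two elementary observations then do most of the remaining work. First, since every block $U(x,x)$ is the projection of $\lp$ onto its first $r(x)$ coordinates, one has $UU'=U'U=U'$ whenever $U'\le U$; and since $U$ is block-diagonal over $A$ with each block a norm-one idempotent on $\lp$, one has $\norm{U}\le 1$ for all $U\in\calu$. The uniform bound $\norm{U}\le 1$, together with the density in $I$ of the algebra of finite-propagation, locally compact operators supported near $A$, reduces the claim by the routine $\varepsilon/3$ argument to the following: for each such algebraic $T$ and each $\varepsilon>0$, produce a single $U\in\calu$ with $\norm{(1-U)T}<\varepsilon$ and $\norm{T(1-U)}<\varepsilon$. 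The absorption identity $UU'=U'$ then guarantees that once this $U$ works, so does every larger one, with the same bound, which is exactly what upgrades the pointwise estimate to convergence along the net.

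So fix such a $T$, with propagation at most $R$, and fix $\varepsilon>0$. Writing $T=(T(x,y))$, the entries of $(1-U)T$ are $(1-U(x,x))T(x,y)$ and those of $T(1-U)$ are $T(x,y)(1-U(y,y))$, and by finite propagation together with local compactness each row and each column of $T$ has only finitely many nonzero entries. Since each $T(x,y)$ is a compact operator on $\lp$ and, for $1<p<\infty$, $\norm{(1-P_r)K}\to 0$ and $\norm{K(1-P_r)}\to 0$ as $r\to\infty$ for every compact $K$ (here $P_r$ denotes the coordinate projection of $\lp$ onto its first $r$ coordinates, so $1-U(x,x)=1-P_{r(x)}$), I can choose for each $x$ an integer $r(x)$ so large that $\norm{(1-P_{r(x)})T(x,y)}$ and $\norm{T(y,x)(1-P_{r(x)})}$ fall below a prescribed, possibly $x$-dependent, tolerance for all of the finitely many relevant $y$; this defines the required $U\in\calu$. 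The one genuinely non-formal step, which I expect to be the main obstacle, is to pass from this entrywise control to the operator-norm bounds $\norm{(1-U)T}<\varepsilon$ and $\norm{T(1-U)}<\varepsilon$: where the Hilbert-space case would invoke orthogonality, here I expect to use a H\"older inequality to estimate the $\lp$-operator norm of a finite-propagation operator by a sum of its entrywise norms weighted by the cardinalities of its rows and columns — these cardinalities being finite by local compactness, uniformly bounded if one also invokes bounded geometry, and otherwise absorbed by letting the tolerances decay along an enumeration of a countable dense subset. Everything else (directedness, the norm bound, the $\varepsilon/3$ reduction, and persistence for larger $U$, the last because $\norm{(1-P_{r'})K}\le\norm{(1-P_r)K}$ and $\norm{K(1-P_{r'})}\le\norm{K(1-P_r)}$ whenever $r'\ge r$) is routine, and combining these yields $UT\to T$ and $TU\to T$ along $\calu$.
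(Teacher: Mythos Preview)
Your proposal is correct and takes essentially the same approach as the paper. The paper packages the argument slightly differently—first approximating $T$ entrywise by a locally finite-rank $F$ with $\|T(x,y)-F(x,y)\|<\varepsilon/2^{n(x,y)}$ along an enumeration, and then choosing $U$ so that $UF=F$ exactly, which collapses the triangle inequality to $\|UT-T\|\le 2\|T-F\|$—but this is the same computation as your direct estimate of the entries of $(1-U)T$ combined with the summable-enumeration bound you describe; your write-up is in fact more complete, since you also address directedness, the reduction to the dense algebraic subalgebra via the uniform bound $\|U\|\le 1$, the two-sided estimate $TU\to T$, and persistence for larger $U$, all of which the paper leaves implicit or omits.
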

\begin{proof}
Let $T\in \cp(A;X)$ and $\epsilon>0$. For any $x,y\in X$, $T(x,y)$ is either a zero operator or a compact operator on $\ell^p$. Recall that $Z\subseteq X$ is a chosen countable dense subset in the definition of the $\ell^p$ Roe algebra. Enumerate $Z\times Z$ such that each pair
$(x,y)\in Z\times Z$  is assigned a unique integer $n\in \mathbb{N}$.

Let $F(x,y)$ be a finite rank operator on $\ell^p$ such that $\norm{T(x,y)-F(x,y)}<\frac{1}{2^n}\epsilon$ when $T(x,y)\neq 0$ and $n$ is the corresponding integer of $(x,y)$, and $F(x,y)=0$ when $T(x,y)=0$. Then $F=\left(F(x,y)\right)$ is a locally  finite rank operator of finite propagation with $\norm{T-F}<\epsilon$.

For each fixed $x$, since $F$ has finite propagation, there are only finitely many $y$ such that $F(x,y)\neq 0$. Let
$$Q(x,x)=\left(\begin{array}{cc}
                   I_{r(x)} & 0 \\
                   0 & 0 \\
                 \end{array}\right)
$$
be a finite-rank projection for some $r(x)\in\bbn$ such that
$$Q(x,x)F(x,y)=F(x,y)$$ for all $y$ with $F(x,y)\neq 0$.
Define $Q(x,y)=0$ if $x\neq y$. Then
$$
\norm{Q T-T}\leq \norm{Q T- Q F}+\norm{Q F-F}+\norm{F-T}.
$$
Here
$\norm{F-T}< \epsilon$, $Q F-F=0$ and $\norm{Q T-Q F}\leq\norm{Q}\norm{F-T}<\epsilon$. So $\norm{Q T-T}< 2\epsilon$ and the proof is done.
\end{proof}

\begin{prop} \label{twoideals} Let $A, B$ be closed subspaces of $X$ such that $X=A\cup B$. Then
\begin{enumerate}
\item $\cp(A;X)+\cp(B;X)=\cp(X)$;
\item $\cp(A;X)\cap \cp(B;X)=\cp(A\cap B;X)$ if $X=A \cup B$ is an $\omega$-excisive decomposition.
\end{enumerate}
\end{prop}
\begin{proof}
Obviously, $\cp(A;X)+\cp(B;X)\subseteq\cp(X)$. For the reverse inclusion, let $\chi_A$ be the characteristic function of $A$. For any $T\in\cp(X)$ and $\epsilon>0$, there exists $T_\epsilon\in \calgp(X)$ with $\norm{T-T_\epsilon}<\epsilon$. Then $T_\epsilon\chi_A\in\cp(A;X)$ and $\norm{T\chi_A-T_\epsilon\chi_A}\leq \norm{T-T_\epsilon}<\epsilon$. It follows that $T\chi_A\in\cp(A;X)$ and consequently, $T=T\chi_A+T(1-\chi_A)\in\cp(A;X)+\cp(B;X)$. Therefore $\cp(A;X)+\cp(B;X) \supseteq\cp(X)$.

For the second part, we will show that
$$
B^p(A; X)\cap B^p(B; X)=\overline{B^p(A;X)B^p(B;X)}=B^p(A\cap B; X).
$$
for an $\omega$-excisive pair $(A, B)$ of $X$.

Obviously, $\cp(A\cap B;X)\subseteq \cp(A;X)\cap \cp(B;X)$ holds for any decomposition pair $(A,B)$. On the other hand, by Proposition \ref{approximateunit}, one can easily see that $\cp(A;X)\cap \cp(B;X)\subseteq \overline{\cp(A;X)\cp(B;X)}$. Finally, for $T_A\in\calgp(A;X)$ and $T_B\in \calgp(B;X)$ with
\begin{align*}
\mathrm{Supp}(T_A)&\subseteq \mathrm{Pen}(A;R)\times \mathrm{Pen}(A;R);\\
\mathrm{Supp}(T_B)&\subseteq \mathrm{Pen}(B;R')\times \mathrm{Pen}(B;R').
\end{align*}
since $(A,B)$ is $w$-excisive, there exists $S>0$ such that
$$
\mathrm{Supp}(T_A T_B)\subseteq \mathrm{Pen}(A\cap B;S)\times \mathrm{Pen}(A\cap B;S).
$$
Hence $\overline{\cp(A;X)\cp(B;X)}\subseteq \cp(A\cap B;X)$. This completes the proof.
\end{proof}
As a general fact (cf. Proof of Proposition 2.7.15 in \cite{WY}), if $A$ is a Banach algebra, and $I$ and $J$ are two closed two-sided ideals of $A$ such that $I+J=A$, then standard isomorphism theorems in pure algebra give that
$$
\frac{I}{I\cap J} \cong \frac{I+J}{J} = \frac{A}{J},
$$
which further induces the following Mayer-Vietoris exact sequence (cf. Proposition 2.7.15 in \cite{WY}).
\begin{prop} \label{Mayer-Vietoris} (cf. \cite{WY})
Let $A$ be a Banach algebra, and let $I$ and $J$ be two closed two-sided ideals of $A$ such that $I+J=A$. Then there is a six term Mayer-Vietoris exact sequence on $K$-theory
\[
\xymatrix{
K_0(I\cap J) \ar[r] & K_0(I) \oplus K_0(J) \ar[r] & K_0(A) \ar[d] \\
K_1(A) \ar[u] & \ar[l] K_1(I) \oplus K_1(J) & \ar[l] K_1(I \cap J).
}
\]
\end{prop}

Combining these lemmas, we obtain the following $\lp$-version of the coarse Mayer-Vietoris principle.
\begin{prop}
Let $X=A\cup B$ be an $\omega$-excisive decomposition of $X$. Then there is a six term Mayer-Vietoris exact sequence:
\[
\xymatrix{
K_0(\cp(A\cap  B)) \ar[r] & K_0 (\cp(A))\oplus K_0(\cp(B)) \ar[r] &  K_0(\cp(X)) \ar[d] \\
K_1(\cp(X)) \ar[u] &  \ar[l]  K_1 (\cp(A))\oplus K_1(\cp(B)) & \ar[l]  K_1(\cp(A\cap  B)).
}
\]
\end{prop}

\vskip 15mm

\section{Twisted $\lp$-Roe algebras and twisted $\lp$-localization algebras}
In this section, we shall define the twisted $\lp$-Roe algebras and the twisted $\lp$-localization algebras for bounded
geometry spaces which admit a coarse embedding into a simply connected complete Riemannian manifold of
nonpositive sectional curvature. The construction of these twisted $\lp$-algebras follows those twisted algebras
introduced in \cite{Yu00}, with technical adjustments suitable to $\ell^p$ spaces.

Let $M$ be a simply connected complete Riemannian manifold of nonpositive sectional curvature. In the following,
we shall assume that the dimension of $M$ is even. If $\dim(M)$ is odd, we can replace $M$ by $M\times \bbr$.
Indeed, the product manifold $M\times \bbr$ is also a simply connected complete Riemannian manifold
with nonpositive sectional curvature. And if $f: \Gamma\to M$ is a coarse embedding, then the induced map
$f': \Gamma\to M\times \bbr$ defined by $f'(\gamma)=(f(\gamma), 0)$ is also a coarse embedding so that
we can replace $f$ by $f'$.

Let $\cA=C_0(M,\cliff_{\bbc}(TM))$ be the $C^*$-algebra of continuous sections $a$ on $M$ which have value $a(x)\in \cliff_\bbc(T_xM)$ at each point $x\in M$ and vanish at infinity, where $\cliff_\bbc(T_xM)$ is the complexified Clifford algebra \cite{ABS} of the tangent space $T_xM$ at $x\in M$ with respect to the inner product on $T_xM$ given by the Riemannian structure of $M$. Here $\cliff_{\bbc}(TM)$ is the Clifford algebra bundle over $M$. Meanwhile, for any $x\in M$, $\cliff_\bbc(T_xM)$ is also a Hilbert space, so that $\cliff_\bbc(TM)$ is a Hilbert space bundle. Let $$\calb:=L^p(M,\cliff_{\bbc}(TM)),$$
the set of all $L^p$ sections of the Hilbert space bundle $\cliff_\bbc(TM)$, which is a Banach space. The $C^*$-algebra $\cala$ acts on $\calb$ by pointwise multiplications, so that it can be regarded as an $L^p$-operator algebra (cf. \cite{Phillips12,NCP}).

Let's make this point of view more precisely. Let $\nu$ be the Radon measure on $M$ induced by the Riemannian metric on $M$ (cf. \cite{Lang}, Chapter XVI, Theorem 4.4). The continuous sections with compact support of the Hilbert space bundle $\cliff_\bbc(TM)$ generates a local $\nu$-measurability structure $\mathcal{W}$ for the cross sections of  $\cliff_\bbc(TM)$ (cf. \cite{Fell-Doran}, Chapter II, section 15). For $1\leq p<\infty$, the Banach space $\mathfrak{B}=L^p(M,\cliff_{\bbc}(TM))$ consists of all those locally $\mu$-measurable cross-sections $f$ of $\mathcal{W}$ such that
$$\|f\|_p^p=\int_M \|f(x)\|^p d \nu (x) \leq \infty,$$
and two elements of $L^p(M,\cliff_{\bbc}(TM))$ are identical if they differ only on a $\mu$-null set (cf. \cite{Fell-Doran}, Chapter II, section 15.7).
Since $M$ is a simply connected complete Riemannian manifold with nonpositive sectional curvature, by the Cartan-Hadamard theorem, for any $x\in M$, the exponential map
$\exp_x: T_x M\to M$ gives rise to a diffeomorphism from $\mathbb{R}^n$ to $M$, so that the Hilbert space bundle  $\cliff_\bbc(TM)$ is isomorphic to the trivial bundle $M\times \mathcal{M}_{2^k}(\mathbb{C})$, where $n=2k=\dim(M)$ and the matrix algebra $\mathcal{M}_{2^k}(\mathbb{C})$ is endowed with a Hilbert space structure induced from the Hilbert space structure of $\cliff_\bbc(T_x M)$. Consequently,  we have
$$
\begin{array}{rcl}
\mathfrak{B} & := & L^p(M,\cliff_{\bbc}(TM)) \\
 & \cong &  L^p(M, \nu; \mathcal{M}_{2^k}(\mathbb{C}))  \\
 & \cong &  L^p(M, \nu)\otimes_p \mathcal{M}_{2^k}(\mathbb{C}).
\end{array}
$$

\par
Let us recall some facts about the tensor norms on the spaces of $p$-integrable functions and tensor product operators (cf. \cite{DefantFloret}, Chapter 7; and \cite{Figiel-Iwaniec-Pelczynski}, Theorem 1.1 and Corollary 1.1. For a good summary, see \cite{Chung} or \cite{Phillips12}). Let $(\Omega, \mu)$ be an arbitrary measure space, $1\leq p<\infty$, and $E$ a Banach space. Then the space $L^p(\mu, E)$ of (classes of a.e. equal) Bochner $p$-integrable functions provides the algebraic tensor product $L^p(\mu)\otimes_{alg} E$ with a ``natural tensor norm" $\Delta_p$ via the natural injective mapping
\[
\begin{array}{ccc}
L^p(\mu) \otimes_{alg} E & \hookrightarrow & L^p(\mu, E) \\
f \otimes x & \mapsto & f(\cdot) x
\end{array}
\]
The completion of $L^p(\mu) \otimes_{alg} E$ against the tensor norm $\Delta_p$ is denoted in the following by $L^p(\mu)\otimes_p E$. Since the simple functions are dense in $L^p(\mu, E)$, the above inclusion induces an isometric isomorphism
$$L^p(\mu)\otimes_p E \cong L^p(\mu, E).$$

For the product measure $\mu\times \nu$ on $\Omega_1\times \Omega_2$, the Fubini-Tonelli theorem shows that the inclusion
$L^p(\mu)\otimes_{alg} L^p(\nu) \hookrightarrow L^p(\mu\times \nu)$ induces  isometric isomorphisms
\[
L^p(\mu)\otimes_p L^p(\nu) \cong L^p(\mu, L^p(\nu)) \cong L^p(\mu\times \nu).
\]
Moreover, the Fubini theorem for Bochner integrals  (cf. \cite{Williams}, Appendices, Theorem B.41; or more generally,  \cite{Fell-Doran}, Chapter II, section 16) shows that we may replace the space $L^p(\nu)$ in the above identifications by $L^p(\nu, F)$ of Bochner $p$-integrable functions into a Banach space $F$. In particular, we have
\[
\begin{array}{rcl}
L^p(\mu)\otimes_p \big( L^p(\nu)\otimes_p F \big) & \cong & L^p(\mu)\otimes_p L^p(\nu, F)  \\
&  \cong & L^p(\mu, L^p(\nu, F))  \\
& \cong &  L^p(\mu\times \nu, F).
\end{array}
\]
provided that the spaces $\Omega_1$ and $\Omega_2$ are locally compact and $\sigma$-compact Hausdorff spaces.

In general, for bounded linear operators $S\in \mathcal{L}(L^p(\mu))$ and $T\in \mathcal{L}(E)$, there are natural examples showing that
the tensor product operator
$$S\otimes T: L^p(\mu)\otimes_{alg} E \longrightarrow L^p(\mu)\otimes_{alg} E$$
may not extend to a bounded operator on $L^p(\mu)\otimes_{p} E$ (cf. \cite{DefantFloret}, Chapter 7, section 7.5 and 7.6).
However, in the situation considered in this paper, we do not meet this difficulties. Namely, on one hand,
if $S=id_{L^p(\mu)}$, it is easy to verify that
$$\|id\otimes T: L^p(\mu) \otimes_p E \longrightarrow L^p(\mu)\otimes_p E\|= \|T\|$$
for any $T\in \mathcal{L}(E)$ (cf. the proof of Theorem 1.2 in \cite{Figiel-Iwaniec-Pelczynski}). On the other hand, if $E=L^p(\nu, F)$ for a Banach space $F$ and the same $p$ as in $L^p(\mu)$, and if $id_{L^p(\nu, F)}$ is the identity operator, then for any bounded operator $S\in \mathcal{L}(L^p(\mu))$, the operator
$$S\otimes id_{L^p(\nu, F)}: L^p(\mu)\otimes_{alg} L^p(\nu, F) \longrightarrow L^p(\mu)\otimes_{alg} L^p(\nu, F)$$
has a unique extension to a bounded linear operator
$$S\otimes id_{L^p(\nu, F)}: L^p(\mu)\otimes_{p} L^p(\nu, F) \longrightarrow L^p(\mu)\otimes_{p} L^p(\nu, F)$$
such that
$$ \|S\otimes id_{L^p(\nu, F)}\|=\|S\|.$$
This can be proved by appealing to the proof of Theorem 1.1 in \cite{Figiel-Iwaniec-Pelczynski} (replacing $L^p(\sigma)$ there by $L^p(\nu, F)$ here and confining to the case $p=q$, so that the integral version of the Minkowski inequality for $\alpha=1$ still holds).
Consequently, for any bounded linear operators $S\in \mathcal{L}(L^p(\mu))$ and $T\in \mathcal{L}(L^p(\nu, F))$, the operator
$S\otimes T=(S\otimes id)(id\otimes T)$ on $\mathcal{L}(L^p(\mu)) \otimes_{alg} \mathcal{L}(L^p(\nu, F))$ extends continuously to a unique bounded linear operator
$$S\otimes T: \mathcal{L}(L^p(\mu)) \otimes_{p} \mathcal{L}(L^p(\nu, F)) \longrightarrow  \mathcal{L}(L^p(\mu)) \otimes_{p} \mathcal{L}(L^p(\nu, F))$$
and $\|S\otimes T\|=\|S\| \|T\|$ (cf. the proof of Corollary 1.1 in \cite{Figiel-Iwaniec-Pelczynski}).

In this paper, we will only concern ourselves with the situation where $L^p(\mu)=\ell^p$ and
$$L^p(\nu, F)=\calb:=L^p(M,\cliff_{\bbc}(TM))\cong L^p(M, \nu; \mathcal{M}_{2^k}(\mathbb{C})).$$
For $a\in \cA=C_0(M,\cliff_{\bbc}(TM))$ and $h\in\calb$. Define
$$\|a\|_{\infty}=\sup\set{\norm{a(x)} \; | \; x\in M}.$$
Then $\norm{a\cdot h}\leq \|a\|_{\infty}\norm{h}$ and $\cala\subset \call(\calb)$. For $n\in\bbn$, define
$$\calb_{n,p}=\calb\oplus_p\cdots\oplus_p \calb,$$
the $\lp$-direct sum of $n$ copies of $\calb$. The $\lp$-norm of $\calb_{n,p}$ is defined as
$$
\norm{(f_1,\cdots,f_n)}_{p}=\Big( {\sum^n_{i=1}\norm{f_i}^p} \Big)^{1/p},\quad\quad \text{for }f_1,\cdots,f_n\in \calb.
$$
Let $M_n(\cala)$ be the  algebra of $n\times n$ matrices with entries in $\cala$. Then $M_n(\cala)$ acts on $\calb_{n,p}$ by matrix multiplications, so that $M_n(\cala)\subset \call(\calb_{n,p})$.
Embed $M_n(\cala)$ into $M_{n+1}(\cala)$ at the top left corner, and let $M_{\infty,p}(\cala)$ be the inductive limit of $\set{M_{n}(\cala)}_{n=1}^\infty$. Define
$$\calb_{\infty,p}=\calb\oplus_p\cdots\oplus_p \calb \oplus_p \cdots$$
to be the $\lp$-direct sum of infinitely many copies of $\calb$ with the $\lp$-norm
$$
\norm{\set{f_i}^\infty_{i=1}}_p=\Big( {\sum^\infty_{i=1}\norm{f_i}^p} \Big)^{1/p},\quad\quad \text{for }\set{f_i}^\infty_{i=1}\in\calb_{\infty,p}.
$$
It follows from the above discussions that we have isometric isomorphisms
$$\calb_{\infty,p}\cong \lp(\bbn,\calb)\cong\lp\otimes_p \calb$$
and all $M_{n}(\cala)$ can be considered as subalgebras of $\call(\calb_{\infty,p})$. Denote by $\calk_p\otimes_{alg}\cala$ the algebraic tensor product of $\calk_p$ and $\cala$. Naturally $\calk_p\otimes_{alg}\cala$ acts on $\calb_{\infty,p}$ and $\calk_p\otimes_{alg}\cala\subset\call(\calb_{\infty,p})$. Let
$$\calk_p\otimes_p \cala=\overline{\calk_p\otimes_{alg}\cala}^{\call(\calb_{\infty,p})}.$$
It follows that $\calk_p\otimes_p \cala\cong M_{\infty, p}(\cala)$.
\par
Let $\Gamma$ be a discrete metric space with bounded geometry. Let $f:\; \Gamma\to M$ be a coarse embedding.
For each $d> 0$, we shall extend the map $f$ to the Rips complex $P_d(\Gamma)$ in the following way.
Note that $f$ is a coarse map, i.e., there exists $R>0$ such that for all $\gamma_1,\gamma_2\in\Gamma$,
$$
d(\gamma_1,\gamma_2)\leq d \; \Longrightarrow  \; d_M(f(\gamma_1),f(\gamma_2))\leq R.
$$
For any point $x=\sum_{\gamma\in \Gamma} c_\gamma \gamma \in P_d(\Gamma)$, where  $c_\gamma\geq 0$ and
$\sum_{\gamma\in \Gamma} c_\gamma=1$,  we  choose a point $f_x\in M$ such that
$$
d(f_x, f(\gamma))\leq R
$$
for all $\gamma\in \Gamma$ with $c_\gamma\not=0$. The correspondence $x\mapsto f_x$ gives a coarse embedding
$P_d(\Gamma)\to M$, also denoted by $f$.

Choose a countable dense subset $\Gamma_d$ of $P_d(\Gamma)$ for each $d>0$ in such a way that
$\Gamma_d\subset \Gamma_{d'}$ when $d<d'$.

\begin{defn}\label{twistedRoealgebra}
Let $\calgp(P_d(\Gamma),\cA)$ be the set of all functions
$$
T:\Gamma_d\times \Gamma_d\to \cK_p\otimes_p\cA\subset\call(\calb_{\infty,p})=\call(\lp\otimes_p L^p(M,\cliff_\bbc(TM)))
$$
such that
\begin{enumerate}
\item there exists $C>0$ such that $\|T(x,y)\| \leq C$ for all $x,y\in \Gamma_d$;
\item there exists $R>0$ such that $T(x,y)=0$ if $d(x,y)>R$;
\item there exists $L>0$ such that for every $z\in P_d(\Gamma)$, the number of elements
in the following set
$$\{ (x, y)\in\Gamma_d\times \Gamma_d: \; d(x, z)\leq 3R, \; d(y, z)\leq 3R, \; T(x,y)\neq 0 \}$$
is less than $L$.
\item there exists $r>0$ such that
$$
\supp(T(x,y))\subset B(f(x),r)
$$
for all $x,y\in \Gamma_d$,  where $B(f(x),r)=\{m\in M: \; d(m,f(x))<r\}$ and, for all $x,y\in\Gamma_d$, the entry
$T(x,y)\in \cK_p\otimes_p\cA$ is a function on $M$ with $T(x, y)(m)\in \cK_p\otimes_p\Cliff_{\bbc}(T_mM)$ for each $m\in M$
so that the {\em support} of $T(x,y)$ is defined by
$$
\supp(T(x,y)):= \{m\in M: \; T(x,y)(m)\neq 0\}.
$$
\end{enumerate}
\end{defn}

For $f\in \ell^p(\Gamma_d, \calb_{\infty,p})$, we define
$$
Tf(x)=\sum_{y\in\Gamma_d} T(x,y)f(y).
$$
Then $T=(T(x,y))\in \call(\ell^p(\Gamma_d,\calbp))$.

\begin{defn} The twisted $\lp$-Roe algebra $\cp(P_d(\Gamma),\cA)$ is defined to be the operator norm closure
of $\calgp(P_d(\Gamma), \cA)$ in $\call(\ell^p(\Gamma_d,\calbp))$.
\end{defn}

The above definition of the twisted $\lp$-Roe algebra is similar to that in \cite{Yu00}.

Let $\clalgp(P_d(\Gamma),\cA)$ be the set of all bounded, uniformly norm-continuous  functions
$$
g:\; \mathbb{R}_+\to \calgp(P_d(\Gamma),\cA)
$$
such that
\begin{enumerate}
\item there exists a bounded function $R(t): \;\mathbb{R}_+\to\mathbb{R}_+$ with
                $\displaystyle \lim_{t\to\infty} R(t)=0$ such that $(g(t))(x,y)=0$ whenever $d(x,y)>R(t)$;
\item there exists $L>0$ such that for every $z\in P_d(\Gamma)$, the number of elements
                    in the following set
                    $$\{ (x, y)\in\Gamma_d\times \Gamma_d: \; d(x, z)\leq 3R, \; d(y, z)\leq 3R, \; g(t)(x,y)\neq 0 \}$$
                    is less than $L$ for every $t\in \mathbb{R}_+$.
\item there exists $r>0$ such that $\supp((g(t))(x,y))\subset B(f(x),r)$ for all
                $t\in\mathbb{R}_+$, $x,y\in\Gamma_d$, where $f:P_d(\Gamma)\to M$ is the extension of the coarse
                embedding $f:\Gamma\to M$ and $B(f(x),r)=\{m\in M: d(m,f(x))<r\}$.
\end{enumerate}

\begin{defn} The twisted $\lp$-localization algebra $\clp(P_d(\Gamma),\cA)$ is defined to be the norm
completion of $\clalgp(\pdg,\cA)$, where $\clalgp(\pdg,\cA)$ is endowed with the norm
$$
\|g\|_\infty=\sup_{t\in\mathbb{R}_+} \|{g(t)}\|_{\cp(\pdg,\cA)}.
$$
\end{defn}

The above definition of the twisted $\lp$-localization Roe algebra is similar to that in \cite{Yu00}. The evaluation homomorphism $e$ from $\clp(\pdg,\cA)$ to $\cp(\pdg,\cA)$ defined by $e(g)=g(0)$ induces a homomorphism
at $K$-theory level:
$$
e_*: \lim_{d\to\infty}K_*(\clp(\pdg,\cA))\to\lim_{d\to\infty}K_*(\cp(\pdg,\cA)).
$$

\begin{thm}\label{twisted-iso}
Let $\Gamma$ be a discrete metric space with bounded geometry which admits a coarse embedding
$f: \Gamma\to M$ into a simply connected, complete Riemannian manifold $M$ of non-positive sectional curvature.
Then the homomorphism
$$
e_*: \lim_{d\to\infty}K_*(\clp(\pdg,\cA))\to\lim_{d\to\infty}K_*(\cp(\pdg,\cA)).
$$
is an isomorphism.
\end{thm}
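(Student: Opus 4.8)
The plan is to adapt to the $\ell^p$, nonpositively curved setting the cutting--and--pasting argument used by Yu \cite{Yu00} for Hilbert space embeddings: localize the twisted algebras over the manifold $M$, run an iterated coarse Mayer--Vietoris argument over $M$, and dispose of the resulting bounded pieces by combining the coarse embedding $f$ with the fact that the Rips complexes $P_d(\Gamma)$ fill in as $d\to\infty$. Concretely, for an open subset $O\subseteq M$, let $\cp(\pdg,\cA)_O$ (resp.\ $\clp(\pdg,\cA)_O$) be the closure of the set of those $T\in\calgp(\pdg,\cA)$ (resp.\ those paths $g(t)$) all of whose entries $T(x,y)\in\calk_p\otimes_p\cA$ are supported, as sections of $\cliff_\bbc(TM)$, inside $O$. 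Using Proposition \ref{approximateunit} and the commutativity of the $M$--variable in $\cA=C_0(M,\cliff_\bbc(TM))$ one checks that these are closed two--sided ideals on which the evaluation $e$ restricts compatibly, that $O\subseteq O'$ induces inclusions, that for any open cover $\{O_i\}$ of $M$ a partition of unity in the $M$--variable shows the $\cp(\pdg,\cA)_{O_i}$ generate $\cp(\pdg,\cA)$ (and likewise for the localization algebra), and that a $w$--excisive splitting $M=O'\cup O''$ produces, exactly as in the $\ell^p$ coarse Mayer--Vietoris principle established above, a six--term Mayer--Vietoris sequence natural in $e$ and matching its localized analogue. Hence, by the five lemma applied to these sequences together with continuity of $K$--theory under increasing unions and $c_0$--direct sums, it suffices to prove that
$$
e_*:\ \lim_{d\to\infty}K_*\!\big(\clp(\pdg,\cA)_O\big)\ \longrightarrow\ \lim_{d\to\infty}K_*\!\big(\cp(\pdg,\cA)_O\big)
$$
is an isomorphism for every \emph{bounded} open $O\subseteq M$.

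To make this reduction terminate we use that $M$ is a complete, simply connected Riemannian manifold of nonpositive curvature with $\dim M=2n$: balls in $M$ are geodesically convex and contractible, and $M$ carries an open cover by uniformly bounded such balls of multiplicity at most $2n+1$ whose nonempty finite intersections are again bounded and contractible. Partitioning this cover into $2n+1$ colour classes turns the reduction into a $(2n+1)$--fold iterated Mayer--Vietoris; at each stage one of the two pieces is the (coarsely disjoint, hence $c_0$--) direct sum of the twisted algebras attached to the balls of one colour, and since $K$--theory commutes with such sums and all diameters and multiplicities are independent of $d$, the whole procedure is uniform in $d$.

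It remains to treat a bounded open $O\subseteq M$, with $\mathrm{diam}(O)=D$; let $R>0$ satisfy $d_M(f(\gamma),f(\gamma'))\le R$ whenever $d(\gamma,\gamma')\le d$. If $T\in\calgp(\pdg,\cA)_O$ has support radius $r$ and $T(x,y)\ne 0$ for $x=\sum c_\gamma\gamma$, then $f(\gamma)\in Pen(O;r+R)$ for every $\gamma$ with $c_\gamma\ne 0$, so the $f$--images of the relevant vertices lie in $Pen(O;r+R)$ and, by the lower bound $\rho_1(d(\gamma,\gamma'))\le d_M(f(\gamma),f(\gamma'))$ of a coarse embedding, they have $\Gamma$--diameter at most $\rho_1^{-1}(D+2r+2R)$. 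Consequently, once $d$ is large enough these vertices span a single, hence contractible, simplex $\Delta$ of $P_d(\Gamma)$ carrying $T$. Letting the support radius and then $d$ tend to infinity, and using that $\cliff_\bbc(TM)$ is trivial over the bounded contractible set $Pen(O;\cdot)$, the pair $\big(\clp(\pdg,\cA)_O,\cp(\pdg,\cA)_O\big)$ becomes, up to stabilization, the pair $\big(\clp(\Delta),\cp(\Delta)\big)$ with fixed coefficient algebra $C_0(O)\otimes\cliff_\bbc(\bbr^{2n})$, which does not interact with $e$. Finally, for the finite--dimensional simplicial complex $\Delta$ the $\ell^p$ analogue of the local index theorem \cite{Yu97,QR} identifies $K_*(\clp(\Delta))$ with the locally finite $K$--homology $K^{lf}_*(\Delta)$ in a way carrying $e_*$ to the coarse assembly map $K^{lf}_*(\Delta)\to K_*(\cp(\Delta))$; since $\Delta$ is contractible and bounded both groups equal $K_*(\mathrm{pt})$, and by naturality applied to the inclusion of a vertex the assembly map is an isomorphism. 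This settles the bounded case and hence the theorem.

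The delicate part is the bookkeeping in the Mayer--Vietoris reduction: one must verify that the support, boundedness and local--finiteness conditions defining $\calgp(\pdg,\cA)$ and $\clalgp(\pdg,\cA)$ are preserved by all the maps in the twisted Mayer--Vietoris sequences (this is where Proposition \ref{approximateunit} and the $w$--excisiveness of the chosen cover of $M$ enter), and that every constant produced along the way is independent of $d$, so that the argument commutes with $\lim_{d\to\infty}$. Constructing the uniformly bounded, bounded--multiplicity cover of $M$, making precise the stabilized identification in the bounded case, and having available the $\ell^p$ form of the local index theorem are the remaining points that require care.
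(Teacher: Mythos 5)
Your approach diverges from the paper's in two essential respects, and both introduce genuine gaps.

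The central problem is the treatment of the disjoint union of the balls of one colour class. After decomposition, the twisted algebra associated to a $(\Gamma, r)$-separate family $\{O_i\}_{i\in J}$ is the $\ell^\infty$-\emph{product} $\prod_{i\in J} \cp(X_i)\otimes\cA_{O_i}$ with the supremum norm (this is exactly $A(X_i: i\in J)$ in the paper), not a $c_0$-direct sum, and $K$-theory does \emph{not} commute with infinite products. So you cannot dispose of these pieces by ``continuity of $K$-theory under $c_0$-direct sums'' and thereby reduce to a single bounded $O$. This is precisely where the paper's Section 5 enters: the contraction of each simplex $\Delta_i$ to the point $\gamma_i$ has to be performed \emph{uniformly} over $i$ inside the product algebra, which is what strong Lipschitz homotopy invariance (Lemma \ref{lipschitz-iso}) together with the Eilenberg swindle in the proof of Lemma \ref{family-iso} accomplish. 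Relatedly, your reduction of a bounded $O$ to a single simplex fails for a parameter-chasing reason: the constant $R=\rho_2(d)$ grows with $d$, so the $\Gamma$-diameter $\rho_1^{-1}(D+2r+2R)\geq\rho_1^{-1}(2\rho_1(d))\geq d$ (since $\rho_1\le\rho_2$), and the relevant vertices never span a single simplex of $P_d(\Gamma)$ no matter how large $d$ is. The paper avoids this by taking the limits over $d$ and over the simplex size $s$ simultaneously (part (3) of the decomposition lemma in Section 5) and then applying the Lipschitz homotopy to the whole family $\{\Delta_i(s)\}_{i\in J}$ at once, never to a single $\Delta$.

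The second gap is the claimed cover of $M$ by uniformly bounded balls of multiplicity at most $2n+1$. The hypotheses grant no bounded geometry or finite asymptotic dimension on $M$ (only simple connectivity, completeness, and nonpositive curvature), so a uniformly bounded, finite-multiplicity cover of all of $M$ need not exist; and even if it did, you would still face the product issue above when handling a colour class. The paper never covers $M$: it exhausts the twisted algebra by the sets $O_r=\bigcup_{\gamma\in\Gamma}B(f(\gamma),r)$, the $r$-neighbourhood of $f(\Gamma)$, as $r\to\infty$, and for fixed $r$ it uses only the bounded geometry of $\Gamma$ together with the coarse embedding to partition the \emph{index set} $\Gamma$ into finitely many classes $\Gamma_1,\dots,\Gamma_{k_0}$, within each of which the balls $B(f(\gamma_i),r)$ are pairwise disjoint. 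The resulting Mayer--Vietoris induction is finite (over $k_0$ colours) and needs no control whatsoever on the geometry of $M$ away from $f(\Gamma)$.
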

The proof of Theorem \ref{twisted-iso} will follow the proof of Theorem 6.8 in \cite{Yu00}. To begin with, we
need to discuss ideals of the twisted algebras associated to open subsets of the manifold $M$.

\begin{defn}\hfill
\begin{enumerate}
\item The $support$ of an element $T$ in $\calgp(\pdg,\cA)$ is defined to be
$$
\begin{array}{ccl}
\supp(T)  & = & \Big\{(x,y,m)\in\Gamma_d\times\Gamma_d\times M:\; m\in\supp(T(x,y))\Big\}    \\
          & = & \Big\{(x,y,m)\in\Gamma_d\times\Gamma_d\times M:\; (T(x,y))(m)\not=0 \Big\};
\end{array}
$$
\item The $support$ of an element $g$ in $\clalgp(\pdg,\cA)$ is defined to be
$$
\bigcup_{t\in\mathbb{R}_+} \supp(g(t)).
$$
\end{enumerate}
\end{defn}

Let $O\subset M$ be an open subset of $M$. Define $\calgp(\pdg,\cA)_O$ to be the subalgebra of
$\calgp(\pdg,\cA)$ consisting of all elements whose supports are contained in $\Gamma_d\times\Gamma_d\times O$,
i.e.,
$$
\calgp(\pdg,\cA)_O \; = \; \{T\in \calgp(\pdg,\cA): \; \supp(T(x,y))\subset O,\; \forall \; x,y\in\Gamma_d \}.
$$
Define $\cp(\pdg,\cA)_O$ to be the norm closure of $\calgp(\pdg,\cA)_O$.  Similarly, let
$$
\clalgp(\pdg,\cA)_O=\Big\{ g\in \clalgp(\pdg,\cA): \; \supp(g)\subset\Gamma_d\times\Gamma_d\times O \Big\}
$$
and define $\clp(\pdg,\cA)_O$ to be the norm closure of $\clalgp(\pdg,\cA)_O$ under the norm
$\|g\|_\infty=\sup_{t\in\mathbb{R}_+} \|{g(t)}\|_{\cp(\pdg,\cA)}$.

Note that $\cp(\pdg,\cA)_O$ and $\clp(\pdg,\cA)_O$ are closed two-sided ideals of $\cp(\pdg,\cA)$ and
$\clp(\pdg,\cA)$, respectively. We also have an evaluation homomorphism
$$e: \clp(\pdg,\cA)_O \to \cp(\pdg,\cA)_O$$
given by $e(g)=g(0)$.

\begin{lem}\label{decomposition}
For any two open subsets $O_1, O_2$ of $M$, we have
$$\cspace_{O_1}+\cspace_{O_2}=\cspace_{O_1\cup O_2},$$
$$\cspace_{O_1}\cap\cspace_{O_2}=\cspace_{O_1\cap O_2},$$
$$\clspace_{O_1}+\clspace_{O_2}=\clspace_{O_1\cup O_2},$$
$$\clspace_{O_1}\cap\clspace_{O_2}=\clspace_{O_1\cap O_2}.$$
Consequently,  we have the following commuting diagram connecting two Mayer-Vietoris sequences at $K$-Theory level:
\[
\xymatrix
{
  & AL_0 \ar[rr] \ar'[d][dd]     &  &  BL_0\ar[rr]\ar'[d][dd]  &  &  CL_0 \ar[dd]^{e_*} \ar[dl]        \\
    CL_1 \ar[ur]\ar[dd]_{e_*}    &  &  BL_1 \ar[ll]\ar[dd]     &  &  AL_1 \ar[ll]\ar[dd]          &    \\
  & A_0 \ar'[r][rr]              &  &  B_0 \ar'[r][rr]         &  &  C_0 \ar[dl]                       \\
    C_1 \ar[ur]                  &  &  B_1\ar[ll]              &  &  A_1 \ar[ll]                  &        }
\]
where, for $*=0, 1$,
$$AL_*=K_*\Big(\clspace_{O_1\cap O_2}\Big), \quad CL_*=K_*\Big(\clspace_{O_1\cup O_2}\Big),$$
$$\;\;\; A_*=K_*\Big(\cspace_{O_1\cap O_2}\Big), \quad \;\; C_*=K_*\Big(\cspace_{O_1\cup O_2}\Big),$$
$$BL_*=K_*\Big(\clspace_{O_1}\Big)\bigoplus K_*\Big(\clspace_{O_2}\Big),$$
$$\;\; \; B_*=K_*\Big(\cspace_{O_1}\Big)\bigoplus K_*\Big(\cspace_{O_2}\Big).
$$
\end{lem}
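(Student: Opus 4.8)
The plan is to prove the four algebra identities first and then feed them, exactly as in Section~3 (cf.\ \cite{HRY}), into the standard $K$-theoretic Mayer--Vietoris machinery. In each identity the inclusions ``$\subseteq$'' for sums and ``$\supseteq$'' for intersections are immediate from the support conditions, so all the content is in the reverse inclusions, and I would handle these with a single device. Fix a partition of unity $\{\phi_1,\phi_2\}$ on $O_1\cup O_2$ subordinate to $\{O_1,O_2\}$: continuous functions with $0\le\phi_i\le1$, $\{\phi_i\neq0\}\subseteq O_i$, and $\phi_1+\phi_2\equiv1$ on $O_1\cup O_2$, extended by $0$ to all of $M$. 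Multiplication by $\phi_i$ in the $M$-variable gives a bounded operator $\Phi_i$ on $\lp(\Gamma_d,\calhp)$ which is diagonal in the $\Gamma_d\times\bbn$ coordinates with each diagonal block the multiplication operator by $\phi_i$ on $L^2(M,\cliff_\bbc(TM))$; since the norm of a diagonal operator on an $\lp$-direct sum is the supremum of the block norms, $\norm{\Phi_i}\le\norm{\phi_i}_\infty\le1$. Because $\phi_i$ is a bounded scalar function, $\Phi_iT$ has matrix entries $(\Phi_iT)(x,y)(m)=\phi_i(m)\,T(x,y)(m)$; for $T$ whose entries are supported in subsets of $O_1\cup O_2$ these entries again lie in $\cK_p\otimes_p\cA$ (continuity at $\partial(O_1\cup O_2)$ is automatic, since there the continuous entry $T(x,y)$ already vanishes), they inherit the finite-propagation, local-finiteness and ``support in $B(f(x),r)$'' conditions, and $\supp\big((\Phi_iT)(x,y)\big)\subseteq\{\phi_i\neq0\}\cap\supp(T(x,y))$. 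Hence $\Phi_i$ restricts to a contractive map $\calgp(\pdg,\cA)_O\to\calgp(\pdg,\cA)_{O\cap O_i}$ for every open $O\subseteq O_1\cup O_2$, and similarly $\clalgp(\pdg,\cA)_O\to\clalgp(\pdg,\cA)_{O\cap O_i}$ when applied pointwise in $t$ (uniform norm-continuity survives because $\norm{\Phi_i(g(t)-g(s))}\le\norm{g(t)-g(s)}$).

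For the sum identity, given $T\in\cspace_{O_1\cup O_2}$ choose $S_n\in\calgp(\pdg,\cA)_{O_1\cup O_2}$ with $\norm{S_n-T}\to0$; since $\phi_1+\phi_2\equiv1$ on $O_1\cup O_2$, which contains the supports of all entries of $S_n$, we get $S_n=\Phi_1S_n+\Phi_2S_n$ with $\Phi_iS_n\in\calgp(\pdg,\cA)_{O_i}$, and contractivity of $\Phi_i$ yields $\Phi_iS_n\to\Phi_iT$, so $T=\Phi_1T+\Phi_2T\in\cspace_{O_1}+\cspace_{O_2}$; in particular the sum is closed. For the intersection identity, let $T\in\cspace_{O_1}\cap\cspace_{O_2}$ and pick $S_n'\in\calgp(\pdg,\cA)_{O_1}$ and $S_n''\in\calgp(\pdg,\cA)_{O_2}$ with $\norm{S_n'-T}\to0$ and $\norm{S_n''-T}\to0$. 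Because the entries of $S_n'$ are supported in $O_1\subseteq O_1\cup O_2$, we have $(\Phi_1+\Phi_2)S_n'=S_n'$, hence $(\Phi_1+\Phi_2)T=T$, that is $T=\Phi_1T+\Phi_2T$. Moreover $\Phi_2S_n'\in\calgp(\pdg,\cA)_{O_1\cap O_2}$ and $\Phi_2S_n'\to\Phi_2T$, so $\Phi_2T\in\cspace_{O_1\cap O_2}$; symmetrically, using $S_n''$, we get $\Phi_1T\in\cspace_{O_1\cap O_2}$. Hence $T\in\cspace_{O_1\cap O_2}$. The two localized identities follow word for word, replacing $T$ by a function $g\in\clspace_O$, applying $\Phi_i$ pointwise in $t\in\bbr_+$, and using the norm $\norm{g}_\infty=\sup_t\norm{g(t)}$.

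These four identities say precisely that, for each pair $(O_1,O_2)$, the algebras $\cspace_{O_1}$ and $\cspace_{O_2}$ are closed two-sided ideals of $\cspace_{O_1\cup O_2}$ whose sum is $\cspace_{O_1\cup O_2}$ and whose intersection is $\cspace_{O_1\cap O_2}$, and likewise for the localization algebras; this is exactly the input required to run the $K$-theoretic Mayer--Vietoris argument of Section~3, producing the two interlocking six-term exact sequences drawn in the statement. The evaluation homomorphisms $e(g)=g(0):\clspace_O\to\cspace_O$ are defined entrywise, hence commute with the ideal inclusions $\clspace_{O_i}\hookrightarrow\clspace_{O_1\cup O_2}$, with the map $\clspace_{O_1\cap O_2}\to\clspace_{O_1}\oplus\clspace_{O_2}$, and with the map $\clspace_{O_1}\oplus\clspace_{O_2}\to\clspace_{O_1\cup O_2}$ used to build these sequences; so $e$ is a morphism between the two systems of defining extensions, and naturality of the six-term exact sequence supplies the asserted commuting diagram.

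I expect the crux to be that the ideal sum is automatically closed and that the intersection is computed correctly: neither follows from $C^*$-algebra generalities, because the twisted $\lp$-Roe and $\lp$-localization algebras are genuine non-selfadjoint Banach algebras, so everything must be done concretely. What makes it work in the $\lp$-setting is the pair of contractive multiplication operators $\Phi_i$ afforded by a subordinate partition of unity, together with the device of approximating an element of the intersection \emph{separately} by elements of each of the two algebraic ideals. The remaining points --- that $\Phi_i$ is bounded, maps into $\cK_p\otimes_p\cA$, and preserves all defining conditions of the twisted and twisted-localization algebras --- are routine once one records that $\phi_i$ is a bounded continuous scalar function on $M$.
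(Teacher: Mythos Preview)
Your proof is correct and follows essentially the same route as the paper: both arguments use a continuous partition of unity $\{\phi_1,\phi_2\}$ subordinate to $\{O_1,O_2\}$ to split an element entrywise in the $M$-variable. The paper only spells out the algebraic inclusion $\calgp(\pdg,\cA)_{O_1\cup O_2}\subseteq\calgp(\pdg,\cA)_{O_1}+\calgp(\pdg,\cA)_{O_2}$ and declares the remaining three equalities ``similar''; your version is more careful in that you explicitly verify closedness of the ideal sum and give a clean argument for the intersection identity using separate approximating sequences from each ideal --- points which, as you note, are not automatic in the non-$C^*$ Banach-algebra setting.
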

\begin{proof}We shall prove the first two equalities. The other two equalities can be proved similarly. Then the two
Mayer-Vietoris exact sequences follow from Proposition \ref{Mayer-Vietoris}.

To prove the first equality, it suffices to show that
$$
\calgp(\pdg, \cA)_{O_1\cup O_2}\subseteq \calgp(\pdg, \cA)_{O_1}+\calgp(\pdg, \cA)_{O_2}.$$
Now suppose $T\in \calgp(\pdg, \cA)_{O_1\cup O_2}$. Take a continuous partition of unity
$\{\varphi_1, \varphi_2\}$ on $O_1\cup O_2$ subordinate to the open over $\{O_1, O_2\}$ of $O_1\cup O_2$.
Define two functions
$$
T_1, \; T_2: \;\; \Gamma_d\times \Gamma_d \longrightarrow \cK_p\otimes_p \cA
$$
by
$$T_1(x, y)(m)=\varphi_1(m) \Big( T(x, y)(m) \Big),$$
$$T_2(x, y)(m)=\varphi_2(m) \Big( T(x, y)(m) \Big) $$
for $x, y\in \Gamma_d$ and $m\in M$.

Then $T_1\in \calgp(\pdg, \cA)_{O_1}$, $T_2\in \calgp(\pdg, \cA)_{O_2}$, and
$$
T=T_1+T_2\in \calgp(\pdg, \cA)_{O_1}+\calgp(\pdg, \cA)_{O_2}
$$
as desired.

For the second equality, similar to the proof of Proposition \ref{Mayer-Vietoris}, it suffices to show that
$$
\cspace_{O_1}\cap\cspace_{O_2}   \subseteq  \overline{\cspace_{O_1}\cspace_{O_2}}.
$$

Consider all (rank) functions $r: \Gamma_d \to \mathbb{N}$, and all pairs $(K, \phi)$ where $K\subset O_1$ is a compact subset in $O_1$, and $\phi\in \cA$ is such that
$$\mathrm{Supp} (\phi)\subset O_1, \quad \mbox{ and } \quad \phi|_{K}=1.$$
For any triple $(r; K, \phi)$, define an element $Q\in B^p_{alg}(P_d(\Gamma), \cA)_{O_1}$ by the formula
\[
Q(x,x)= \left(
\begin{array}{cc}
I_{r(x)}  &  0  \\
0   &   0
\end{array}
\right)
\otimes \phi
\]
and $Q(x,y)=0$ if $x\neq y$. It is straightforward that all such elements $Q$ constitute an approximate unit $\mathcal{Q}$ of
$\cspace_{O_1}$. Thus the second equality follows in a similar way to the second equality in Proposition \ref{twoideals}.
This completes the proof.
\end{proof}

It would be convenient to introduce the following notion associated with the coarse embedding
$f: \Gamma\to M$.

\begin{defn}\label{gammarseperate} Let $r>0$. A family of open subsets $\{O_i\}_{i\in J}$ of $M$ is said to be
{\em $(\Gamma, r)$-separate} if
\begin{enumerate}
\item $O_i\cap O_j=\emptyset $ if $i\neq j$;
\item there exists $\gamma_i\in \Gamma$ such that $O_i\subseteq B(f(\gamma_i), r)\subset M$ for each $i\in J$.
\end{enumerate}
\end{defn}

\begin{lem}\label{union-iso}
If $\{O_i\}_{i\in J}$ is a family of $(\Gamma, r)$-separate open subsets of $M$, then
$$
e_*: \lim_{d\to\infty}K_*(\clp(\pdg,\cA)_{\sqcup_{i\in J} O_i})
\to\lim_{d\to\infty}K_*(\cp(\pdg,\cA)_{\sqcup_{i\in J} O_i})
$$
is an isomorphism, where ${\sqcup_{i\in J} O_i}$ is the (disjoint) union of $\{O_i\}_{i\in J}$.
\end{lem}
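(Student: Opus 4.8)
The plan is to handle the pieces $O_i$ one at a time and then reassemble, using the disjointness of the $O_i$ for the decomposition and the fact that each $O_i$ sits inside a ball $B(f(\gamma_i),r)$ of a \emph{fixed} radius for the per-piece analysis.

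First I would record the decomposition. For each $i$ let $P_i$ be the idempotent on $\ell^p(\Gamma_d,\calhp)$ given by pointwise multiplication by the characteristic function of $O_i$ in the $M$-variable. Since the $O_i$ are pairwise disjoint these are commuting norm-one idempotents with $P_iP_j=0$ for $i\neq j$, and — because the entries $T(x,y)$ of an element of the twisted algebra act by multiplication in the $M$-direction and are supported in $\sqcup_iO_i$ — one has $P_iTP_i=P_iT=TP_i$ and $T=\sum_iP_iTP_i$ (convergence in the strong topology; the entrywise norm-convergence uses that the entries lie in $C_0$). Thus $T\mapsto (P_iTP_i)_{i\in J}$ is an injective homomorphism $\cp(\pdg,\cA)_{\sqcup_iO_i}\to\prod_{i\in J}\cp(\pdg,\cA)_{O_i}$ which identifies $\cp(\pdg,\cA)_{\sqcup_iO_i}$ with the completion of $\bigoplus_{i\in J}\calgp(\pdg,\cA)_{O_i}$ consisting of those uniformly bounded families realised by a single operator satisfying the defining propagation, local-finiteness and support conditions with constants independent of $i$; the identical statement holds for the localization algebras, and $e$ acts coordinatewise. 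Making this precise requires the relevant $\ell^p$-norm estimates, which — since the $M$-direction is an $L^2$-factor — are not entirely formal when $p\neq 2$, so I expect some care is needed here.

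Next I would analyse a single piece. If $T\in\calgp(\pdg,\cA)_{O_i}$ has support datum $r'$, then $T(x,y)\neq 0$ forces $\supp(T(x,y))\subseteq B(f(x),r')\cap B(f(\gamma_i),r)\neq\emptyset$, hence $d_M(f(x),f(\gamma_i))\leq r+r'$; as $f$ is a coarse embedding and $\Gamma$ has bounded geometry, the relevant $x$ range over a bounded subset of $P_d(\Gamma)$ spanned by a uniformly bounded number of vertices of $\Gamma$ near $\gamma_i$, which for each $d$ is a finite contractible subcomplex. So, for each $d$, $\cp(\pdg,\cA)_{O_i}$ and $\clp(\pdg,\cA)_{O_i}$ are the twisted Roe and localization algebras of a finite contractible complex with coefficient algebra $\calk_p\otimes_pC_0(O_i,\cliff_\bbc(TM))$, and the evaluation map is a $K$-theory isomorphism by the $\ell^p$-local index theorem (the $\ell^p$ analogue of \cite{Yu97}, cf. \cite{CN}); moreover the inverse can be built from the (uniformly bounded) combinatorial and metric data of the piece, hence uniformly in $i$. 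In particular
$$
e_*:\lim_{d\to\infty}K_*(\clp(\pdg,\cA)_{O_i})\to\lim_{d\to\infty}K_*(\cp(\pdg,\cA)_{O_i})
$$
is an isomorphism.

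Finally I would reassemble. Inside the uniform direct sum of Step~1, the $c_0$-direct sum $\bigoplus^{c_0}_i\cp(\pdg,\cA)_{O_i}$ is a closed two-sided ideal on which $e_*$ is, after $K$-theory and $\lim_{d\to\infty}$, the direct sum of the coordinatewise isomorphisms of Step~2 (since $K$-theory commutes with $c_0$-direct sums); passing to the whole uniform direct sum I would run the Mayer--Vietoris/five-lemma argument of \cite{HRY} on the corresponding extension, the statement for the ``corona'' quotient being supplied precisely by the uniformity in Step~2, and then combine with Step~1. The main obstacle is exactly this last step: the twisted algebra over $\sqcup_iO_i$ is strictly larger than the $c_0$-direct sum of the pieces — it contains, for instance, ``periodic'' operators whose pieces have constant, non-vanishing norm — so one cannot simply invoke that $K$-theory commutes with direct sums, and what is genuinely needed is a \emph{uniform} (controlled) form of the $\ell^p$-local index isomorphism so that the individual inverses of $e_*$ assemble over the infinitely many pieces. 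A secondary difficulty is establishing that $\ell^p$-local index isomorphism itself for finite complexes with Clifford coefficients, since the classical $p=2$ proof uses Hilbert-space methods.
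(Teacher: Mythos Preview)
Your decomposition step is essentially the paper's Lemma~\ref{decomposition} (the second one, in Section~5): the twisted algebras over $\sqcup_i O_i$ are identified with the \emph{uniform} direct products
\[
A(X_i:i\in J)=\Big\{\bigoplus_i T_i\ :\ T_i\in \cp(X_i)\otimes\cA_{O_i},\ \sup_i\|T_i\|<\infty\Big\}
\]
and the analogous $A_L(X_i:i\in J)$, where $X_i=\{x\in\pdg:d(x,\gamma_i)\le R\}$; after $\lim_{d\to\infty}$ one may take $X_i=\Delta_i(s)$ a simplex containing $\gamma_i$. Your support argument in Step~2 is exactly what forces the $X_i$ to be uniformly bounded.

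The gap you yourself flag in Step~3 is real, and your proposed fix does not close it. Knowing that $e_*$ is an isomorphism on each coordinate, and even on the $c_0$-ideal, says nothing about the corona quotient; a Mayer--Vietoris/five-lemma argument on the extension $0\to\bigoplus^{c_0}\to\text{uniform}\to\text{corona}\to 0$ requires precisely the statement you are trying to prove for the middle term, or an independent analysis of the corona, which you do not have. Appealing to a ``uniform controlled $\ell^p$-local index isomorphism'' is a restatement of the problem rather than a solution.

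The paper sidesteps this entirely. Instead of proving the isomorphism piecewise and reassembling, it works with the uniform product $A_L(X_i:i\in J)$ as a single object and shows directly that the kernel $A_{L,0}(X_i:i\in J)$ of $e$ has vanishing $K$-theory. The mechanism is: (i) strong Lipschitz homotopy invariance of $K_*(A_{L,0}(-))$ under simultaneous Lipschitz homotopies of the family $\{X_i\}$ with a \emph{common} Lipschitz constant and equicontinuity in $t$ (Lemma~\ref{lipschitz-iso}); (ii) the family of simplices $\{\Delta_i\}$ is strongly Lipschitz homotopy equivalent to the family of points $\{\gamma_i\}$; (iii) for the point family, an Eilenberg swindle kills $K_*(A_{L,0}(\{\gamma_i\}:i\in J))$ outright. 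The swindle and the homotopy both act on the whole uniform product at once, so the infinitude of $J$ never causes trouble; no per-piece inverse of $e_*$ is ever constructed or assembled. This is the missing idea in your outline.
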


We will prove Lemma \ref{union-iso} in the next section. Granting Lemma \ref{union-iso} for the moment, we are able to prove Theorem \ref{twisted-iso}.

\begin{proof}[Proof of Theorem \ref{twisted-iso}](\cite{Yu00}). For any $r>0$, we define $O_r\subset M$ by
$$ O_r=\bigcup_{\gamma\in\Gamma}B(f(\gamma),r), $$
where $f:\Gamma\to M$ is the coarse embedding and $B(f(\gamma),r)=\{p\in M:  d(p, f(\gamma))<r\}$.
\par
For any $d>0$, if $r<r'$ then $\cspace_{O_r}\subseteq \cspace_{O_{r'}}$ and $\clspace_{O_r}\subseteq \clspace_{O_{r'}}$.
By definition, we have
$$\cspace=\lim_{r\to\infty}\cspace_{O_r},$$
$$\clspace=\lim_{r\to\infty}\clspace_{O_r}.$$
\par
On the other hand, for any $r>0$, if $d<d'$ then $\Gamma_d\subseteq \Gamma_{d'}$ in
$\pdg\subseteq P_{d'}(\Gamma)$ so that we have natural inclusions
$\cspace_{O_r}\subseteq \cp(P_{d'}(\Gamma), \cA)_{O_r}$ and
$\clspace_{O_r}\subseteq \clp(P_{d'}(\Gamma), \cA)_{O_r}$.
These inclusions induce the following commuting diagram
\[
\scriptsize
\xymatrix
{
  &  K_*(\clp(P_{d'}(\Gamma), \cA)_{O_r}) \ar[rr]^{e_*} \ar'[d][dd]
      &  & K_*(\cp(P_{d'}(\Gamma), \cA)_{O_r})  \ar[dd]        \\
  K_*(\clspace_{O_r})  \ar[ur]\ar[rr]^{\mbox{\qquad\qquad $e_*$}} \ar[dd]
      &  & K_*(\cspace_{O_r}) \ar[ur]\ar[dd] \\
  &  K_*(\clp(P_{d'}(\Gamma), \cA)_{O_{r'}}) \ar'[r]^{\mbox{\qquad\qquad $e_*$}}[rr]
      &  & K_*(\cp(P_{d'}(\Gamma), \cA)_{O_{r'}})                \\
  K_*(\clspace_{O_{r'}})  \ar[rr]^{e_*}\ar[ur]
      &  & K_*(\cspace_{O_{r'}}) \ar[ur]        }
\]
\\
which allows us to change the order of limits from $\displaystyle\lim_{d\to\infty}\lim_{r\to\infty}$ to
$\displaystyle\lim_{r\to\infty}\lim_{d\to\infty}$ in the second piece of the following commuting diagram
\[
\xymatrix{
  \displaystyle\lim_{d\to\infty}K_*(\clspace)      \ar[d]_{\cong} \ar[r]^{e_*}        &
        \displaystyle\lim_{d\to\infty}K_*(\cspace)       \ar[d]_{\cong}                     \\
  \displaystyle\lim_{d\to\infty}\lim_{r\to\infty}K_*(\clspace_{O_r})      \ar[d]_{\cong}     \ar[r]^{e_*}     &
        \displaystyle\lim_{d\to\infty}\lim_{r\to\infty}K_*(\cspace_{O_r}) \ar[d]_{\cong}    \\
  \displaystyle\lim_{r\to\infty}\lim_{d\to\infty}K_*(\clspace_{O_r})      \ar[r]^{e_*}                        &
        \displaystyle\lim_{r\to\infty}\lim_{d\to\infty}K_*(\cspace_{O_r})   }
\]
So, to prove Theorem \ref{twisted-iso}, it suffices to show that, for any $r>0$,
$$
e_*:\; \lim_{d\to\infty}K_*(\clspace_{O_r})\to\lim_{d\to\infty}K_*(\cspace_{O_r})
$$
is an isomorphism.

Let $r>0$. Since $\Gamma$ has bounded geometry and $f:\Gamma\to M$ is a
coarse embedding, there exist finitely many mutually disjoint subsets of $\Gamma$,
say $\Gamma_k:=\{\gamma_i: i\in J_k\}$ with some index set $J_k$ for $k=1, 2, \cdots, k_0$,
such that $\Gamma=\bigsqcup_{k=1}^{k_0} \Gamma_k$ and, for each $k$,
$d(f(\gamma_i), f(\gamma_j))>2r$ for distinct elements $\gamma_i, \gamma_j$ in $\Gamma_k$.

For each $k=1, 2, \cdots, k_0$, let
$$
O_{r,k}=\bigcup_{i\in J_k} B(f(\gamma_i),r).
$$
Then $O_r=\bigcup^{k_0}_{k=1}O_{r,k}$ and each $O_{r,k}$, or an intersection of several $O_{r,k}$,
is the union of a family of $(\Gamma, r)$-separate (Definition \ref{gammarseperate}) open subsets of $M$.

Now Theorem \ref{twisted-iso} follows from Lemma \ref{union-iso} together with a Mayer-Vietoris sequence argument by using Lemma \ref{decomposition}.
\end{proof}

\vskip 5mm

\section{Strong Lipschitz homotopy invariance}
In this section, we shall present Yu's arguments about strong Lipschitz homotopy invariance for $K$-theory of the
twisted localization algebras \cite{Yu00}, and prove Lemma \ref{union-iso} of the previous section.
\par
Let $f: \Gamma\to M$ be a coarse embedding of a bounded geometry discrete metric space $\Gamma$ into a
simply connected complete Riemannian manifold $M$ of nonpositive sectional curvature, and let $r>0$. Let
$\{O_i\}_{i\in J}$ be a  family of $(\Gamma, r)$-separate open subsets of $M$, i.e.,
\begin{enumerate}
\item $O_i\cap O_j=\emptyset $ if $i\neq j$;
\item there exists $\gamma_i\in \Gamma$ such that
$O_i\subseteq B(f(\gamma_i), r)\subset M$ for each $i\in J$.
\end{enumerate}
For $d>0$, let $X_i$, $i\in J$,  be a family of closed subsets of $\pdg$ such that $\gamma_i\in X_i$ for every $i\in J$
and $\{X_i\}_{i\in J}$ is uniformly bounded in the sense that there exists $r_0>0$ such that
$diameter(X_i)\leq r_0$ for each $i\in J$. In particular, we will consider the following three cases of
$\{X_i\}_{i\in J}$:
\begin{enumerate}
\item $X_i=B_{P_d(\Gamma)}(\gamma_i, R):=\{x\in\pdg:\; d(x,\gamma_i)\leq R\}$, for some common $R>0$ for all $i\in J$;
\item $X_i=\Delta_i$, a simplex in $\pdg$ with $\gamma_i\in \Delta_i$ for each $i\in J$;
\item $X_i=\{\gamma_i\}$ for each $i\in J$.
\end{enumerate}

For each $i\in J$, let $\cA_{O_i}$ be the subalgebra of $\cA=C_0(M, \Cliff_{\bbc}(TM))$ generated by those
functions whose supports are contained in $O_i$.

We define $A(X_i: i\in J)$ to be the closed {\bf subalgebra} of the Banach algebra
$$\left\{ \; {\bigoplus_{i\in J}} T_i   \; \left|\;\;  T_i\in \cp(X_i)\otimes_p \cA_{O_i}, \;
                            \sup_{i\in J} \|T_i\| <\infty \right. \right\}
$$
generated by the elements $ {\bigoplus_{i\in J}} T_i  $ for which conditions (3) and (4) from Definition \ref{twistedRoealgebra} are
satisfied by all operators $T_i$, $i\in J$, viewed as functions
$$T_i: \Big(\Gamma_d \cap X_i \Big) \times  \Big(\Gamma_d \cap X_i \Big) \to \cK_p\otimes_p\cA_{O_i}, $$
{\bf uniformly} (cf. \cite{Willett-Yu-II,Yu00}).

Similarly, we define $A_{L, alg}(X_i:i\in J)$ to be the algebra of bounded, uniformly continuous maps
$$g: [0, \infty) \to A(X_i: i\in J)$$
such that if we write
$$g(t)=\bigoplus_{i\in J} \; g_i(t)$$
then conditions (3) and (4) from Definition \ref{twistedRoealgebra} are satisfied by all operators $g_i(t)$, $i\in J$, $t\in [0,\infty)$,
uniformly, and
there exists a bounded function $c(t)$ on $\mathbb{R}_+$ with $\lim_{t\to \infty} c(t)=0$ such that
$$\Big( g_i(t) \Big)(x, y)=0$$
whenever $d(x, y)>c(t)$ for all $i\in J$, $x, y\in \Gamma_d\cap X_i$ and $t\in [0, \infty)$ (cf. \cite{Willett-Yu-II,Yu00}).

Define $A_{L}(X_i:i\in J)$ to be the completion of $A_{L, alg}(X_i:i\in J)$ for the norm
$$\|g\|=\sup_{t\in [0,\infty)} \|g(t)\|.$$
Note that there is an evaluation-at-zero map
$$e: A_{L}(X_i:i\in J) \to A(X_i: i\in J).$$

For each natural number $s>0$, let $\Delta_i{(s)}$ be the simplex with vertices
$\{\gamma\in\Gamma: \; d(\gamma, \gamma_i)\leq s\}$ in $\pdg$ for $d>s$.
\par
\begin{lem}\label{decomposition}Let  $O=\sqcup_{i\in J} O_i$ be the disjoint union of  a family of $(\Gamma, r)$-separate
open subsets $\{O_i\}_{i\in J}$ of $M$ as above. Then
\begin{enumerate}
\item \qquad $\cspace_O
                       \;\; \cong  \;\;  \displaystyle\lim_{R\to \infty}  \;  A(\{x\in \pdg: d(x,\gamma_i)\leq R\}:i\in J)$;
\item \qquad $\clspace_O
                       \;\;  \cong \;\;  \displaystyle\lim_{R\to \infty} \; A_L(\{x\in \pdg: d(x,\gamma_i)\leq R\}:i\in J)$;
\item $\displaystyle\lim_{d\to\infty}\cspace_O
                       \;\; \cong \;\;  \lim_{s\to \infty}  \;  A(\Delta_i{(s)}:i\in J)$;
\item $\displaystyle\lim_{d\to \infty}\clspace_O
                       \;\;  \cong \;\;  \displaystyle\lim_{s\to\infty}  \;  A_L(\Delta_i{(s)}:i\in J)$.
\end{enumerate}
\end{lem}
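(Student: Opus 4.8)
\emph{Approach and the map out of $\cspace_O$.} The plan is to adapt the proof of \cite[Lemma 6.3]{Yu00} to the $\lp$ setting: because the open sets $O_i$ are pairwise disjoint and each lies in the ball $B(f(\gamma_i),r)$, an operator whose fibrewise supports lie over $O=\sqcup_{i\in J}O_i$ must ``decouple'', up to inductive limits, into a product of pieces indexed by $J$, the coarse embedding $f$ supplying the control that makes the $i$-th piece live near $\gamma_i$. I will prove the first isomorphism in detail. Fix $d$ and write $X_i^R=\{x\in\pdg:d(x,\gamma_i)\le R\}$. Since the $O_i$ are disjoint open subsets of $M$, the indicator $\chi_{O_i}$ is continuous on $O$, so for $T\in\calgp(\pdg,\cA)_O$ the cut-down $(T)_i$ with matrix entries $(T)_i(x,y)=\chi_{O_i}T(x,y)$ again takes values in $\cK_p\otimes_p\cA$, now supported over $O_i$, and $\|(T)_i\|\le\|T\|$ because $\chi_{O_i}$ is a contraction of $\calh=L^2(M,\Cliff_\bbc(TM))$, hence of $\calhp$. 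The key observation is that if $(T)_i(x,y)\ne 0$ then $\supp(T(x,y))\cap O_i\ne\emptyset$; together with $\supp(T(x,y))\subset B(f(x),r)$ and $O_i\subset B(f(\gamma_i),r)$ this forces $d_M(f(x),f(\gamma_i))<2r$, whence $d(x,\gamma_i)\le R_1:=\sup\{t:\rho_1(t)\le 2r\}<\infty$ by the lower control $\rho_1$ of $f$; if $T$ has propagation $\le S$, then also $d(y,\gamma_i)\le R_1+S$. Hence each $(T)_i$ is supported, in the $\Gamma_d$-variables, on $X_i^R\times X_i^R$ for $R=R_1+S$, and --- since $X_i^R$ has diameter $\le 2R$, so $f(X_i^R)$ lies in a ball about $f(\gamma_i)$ of radius controlled only by $\rho_2$, harmlessly after enlarging the parameter $r$ --- it defines an element of $\cp(X_i^R)\otimes\cA_{O_i}$, with the finiteness condition (3) for $T$ passing to $(T)_i$. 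This gives the map $T\mapsto\bigoplus_i(T)_i$.

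\emph{The assembly map and conclusion of (1).} Conversely, for each $R$ one has an assembly homomorphism $\Phi_R\colon A(X_i^R:i\in J)\to\cspace_O$, $\ \Phi_R(\bigoplus_i T_i)=\sum_i\iota_i(T_i)$, where $\iota_i$ extends an operator by zero off $X_i^R\cap\Gamma_d$; the sum makes sense because, by bounded geometry of $\Gamma$, at most $N(R)$ of the summands are nonzero at any fixed $x$, where $N(R)$ bounds the number of $\gamma_i$ within distance $R$ of a point of $\pdg$. It is multiplicative since for $i\ne j$ the product $\iota_i(T_i)\iota_j(S_j)$ has fibrewise supports in $O_i\cap O_j=\emptyset$ and hence vanishes (equivalently, $\cp(\pdg,\cA)_{O_i}$ and $\cp(\pdg,\cA)_{O_j}$ are closed ideals with zero intersection). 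Moreover $\Phi_R$ is a topological isomorphism onto its image: the cut-down construction is a one-sided inverse, and disjointness of the $O_i$ --- which makes the $\calh$-directions of distinct summands $L^2$-orthogonal --- together with an elementary $\lp$-convexity estimate gives $\sup_i\|T_i\|\le\|\Phi_R(\bigoplus_j T_j)\|\le N(R)\sup_i\|T_i\|$. By the previous paragraph every element of $\calgp(\pdg,\cA)_O$ equals $\Phi_R$ of its family of cut-downs once $R$ is large, and approximating each $T_i$ uniformly by finite-propagation locally compact operators shows that $\Phi_R$ indeed lands in $\cspace_O$; hence the images of the $\Phi_R$ have dense union in $\cspace_O$, and passing to $\lim_{R\to\infty}$ gives $\cspace_O\cong\lim_{R\to\infty}A(X_i^R:i\in J)$.

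\emph{Statements (2), (3), (4).} For (2) one repeats the argument with $g\in\clalgp(\pdg,\cA)_O$ and cut-downs $(g)_i(t)=\chi_{O_i}g(t)$: the vanishing bound $(g(t))(x,y)=0$ for $d(x,y)>R(t)$ with $R(t)\to 0$, and the uniform norm-continuity, are inherited, so $(g)_i\in\clp(X_i^R)\otimes\cA_{O_i}$ and $\bigoplus_i(g)_i$ meets the two defining conditions of $A_L(X_i^R:i\in J)$, noting that $\operatorname{diam}(X_i^R)\le 2R$ is uniform in $i$. For (3) and (4) I would combine (1), resp.\ (2), with the interchange of the inductive limits $\lim_{d\to\infty}$ and $\lim_{R\to\infty}$ (resp.\ $\lim_{s\to\infty}$) --- legitimate since the relevant connecting maps commute --- and then identify, for fixed $R$, $\lim_{d\to\infty}A(X_i^R(d):i\in J)$ with $\lim_{s\to\infty}A(\Delta_i(s):i\in J)$, where $X_i^R(d)=\{x\in\pdg:d(x,\gamma_i)\le R\}$. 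This last step is a cofinality statement about the Rips complexes: a simplex has $\ell^1$-diameter $2$ and contains $\gamma_i$, so $\Delta_i(s)\subset X_i^{2}(d')$ for every $d'\ge 2s$; and conversely every point of $X_i^R(d)$ lies in a simplex of $\pdg$ all of whose vertices are within $\Gamma$-distance $\tfrac{1}{2}(R+2)d$ of $\gamma_i$, so $X_i^R(d)$ is contained in the single simplex $\Delta_i\!\left(\tfrac{1}{2}(R+2)d\right)$ once one passes to $P_{(R+2)d}(\Gamma)$. Hence the two directed systems of subsets are mutually cofinal and their limits agree; the same works verbatim for the localization algebras.

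\emph{Main obstacle.} The step I expect to be genuinely harder than in \cite{Yu00} is the norm and membership control of $\Phi_R$: for $p=2$ the ambient Hilbert-space structure makes the analogous map an isometry with range visibly inside the Roe algebra, whereas for $p\ne 2$ one only obtains a topological isomorphism onto a closed subalgebra, with a constant $N(R)$ growing with $R$, and one must invoke the bounded-geometry finiteness conditions (2) and (3) in the definition of $\calgp(\pdg,\cA)$ both to make the assembled operator bounded and to place it in $\cspace_O$. One then has to check that a topological, rather than isometric, embedding with dense union of images still produces the stated identifications --- which suffices, since the proof of Theorem \ref{twisted-iso} only uses the resulting isomorphisms on $K$-theory. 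A secondary technical point is the simplicial bookkeeping in (3)--(4), where one must track the $\ell^1$-metric on the Rips complex and the uniform bound on the diameters of the $X_i$.
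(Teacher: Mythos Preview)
Your approach is essentially the same as the paper's: decompose an element of $\calgp(\pdg,\cA)_O$ via the $L^2$-orthogonal splitting $\calh_O=\bigoplus_i\calh_{O_i}$, use the support condition $\supp(T(x,y))\subset B(f(x),r')$ together with $(\Gamma,r)$-separateness and the lower control of $f$ to force $x,y$ into a ball about $\gamma_i$, and identify the result with the inductive limit over $R$; the paper then simply asserts that (2)--(4) ``follow easily from (1).'' Your write-up is in fact more detailed than the paper's sketch --- in particular you correctly isolate the $\ell^p$-norm control of the assembly map $\Phi_R$ as the point requiring extra care relative to \cite{Yu00}, whereas the paper passes over this by declaring the decomposition a Banach-algebra isomorphism without further comment. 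One small slip: the radius in condition~(4) of the twisted algebra need not coincide with the $r$ in $(\Gamma,r)$-separate, so your bound $d_M(f(x),f(\gamma_i))<2r$ should read $<r+r_T$ for the $T$-dependent radius $r_T$; this does not affect the argument.
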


\begin{proof} (cf. \cite{Yu00}) Let $\cA_O$ be the subalgebra of $\cA=C_0(M,\Cliff_{\bbc}(TM))$ generated by elements whose supports
are contained in $O$. Let $\calb_O=L^p(O,\Cliff_{\bbc}(TM))$ and let $\calb_{O,\infty,p}$ be the $\lp$-direct sum of infinite copies of $\calb_O$ with the $\lp$-norm
$$
\norm{\{f_i\}^\infty_{i=1}}_p=\Big( {\sum^\infty_{i=1}\norm{f_i}^p} \Big)^{1/p},\quad\quad \text{for }\{f_i\}^\infty_{i=1}\in \calb_{O,\infty,p}.
$$
The algebra $\calk_p\otimes_p\cA_O$ acts on $\calb_{O,\infty,p}$ and the algebra $\cp(\pdg,\cala)_O$ acts on $\lp(\Gamma_d, \calb_{O,\infty,p})$. We have a decomposition
$$
\lp(\Gamma_d, \calb_{O,\infty,p})=\left( \bigoplus_{i\in J} \lp(\Gamma_d, \calb_{O_i,\infty,p}) \right)_p.
$$
Each $T\in \calgp(\pdg,\cA)_O$ has a corresponding decomposition
$$
T=\bigoplus_{i\in J} \;  T_i
$$
such that there exists $R>0$ for which each $T_i$ is supported on
$$
\{(x,y,p): \; p\in O_i,\; x,y\in\Gamma_d, \; d(x,\gamma_i)\leq R, \; d(y,\gamma_i)\leq R\}.
$$
On the other hand, the Banach algebra
$\cp(\{x\in\pdg:d(x,\gamma_i)\leq R\})\otimes_p \cA_{O_i}$ acts on
$$
\ell^p\Big( \{x\in\Gamma_d:d(x,\gamma_i)\leq R\}, \;\; \calb_{O_i,\infty,p} \Big),
$$
so that on $\ell^p(\Gamma_d,\calb_{O_i,\infty,p})$, for each $R>0$,
the algebra
$$A(\{x\in\pdg:d(x,\gamma_i)\leq R\}:i\in J)$$
can be
represented as a subalgebra of $\cspace_O$. In this way,  the decomposition
$T=\oplus_{i\in J}T_i$ induces a Banach algebra isomorphism
$$
\cspace_O\cong\displaystyle\lim_{R\to\infty}A\Big( \{x\in\pdg:d(x,\gamma_i)\leq R \}: i\in J \Big)
$$
as desired in (1). Then (2),(3) and (4) follows straightforwardly from (1).
\end{proof}

Now we turn to recall the notion of strong Lipschitz homotopy \cite{Yu97, Yu98, Yu00}.

Let $\{Y_i\}_{i\in J}$ and $\{X_i\}_{i\in J}$ be two families of uniformly bounded closed subspaces of
$\pdg$ for some $d>0$ with $\gamma_i\in X_i$, $\gamma_i\in Y_i$ for every $i\in J$.
A map $g: \; \bigsqcup_{i\in J}X_i\to \bigsqcup_{i\in J}Y_i$ is said to be $Lipschitz$ if
\begin{enumerate}
\item $g(X_i)\subseteq Y_i$ for each $i\in J$;
\item there exists a constant $c$, independent of $i\in J$, such that
$$ d(g(x),g(y))\leq c \; d(x,y) $$
for all $x,y\in X_i$, $i\in J$.
\end{enumerate}

Let $g_1,g_2$ be two Lipschitz maps from $\bigsqcup_{i\in J}X_i$ to $\bigsqcup_{i\in J}Y_i$.
We say $g_1$ is {\it strongly Lipschitz homotopy} equivalent to $g_2$ if there exists a continuous map
$$
F: \; [0,1]\times (\sqcup_{i\in J}X_i)\to \sqcup_{i\in J}Y_i
$$
such that
\begin{enumerate}
\item $F(0,x)=g_1(x)$, $F(1,x)=g_2(x)$ for all $x\in\sqcup_{i\in J}X_i$;
\item there exists a constant $c$ for which $ d(F(t,x),F(t,y))\leq c \; d(x,y)$
for all $x,y\in X_i$, $t\in [0,1]$, where $i$ is any element in $J$;
\item $F$ is equicontinuous in $t$, i.e., for any $\varepsilon>0$ there exists $\delta>0$ such that
$d(F(t_1,x),F(t_2,x))<\varepsilon $ for all $x\in\sqcup_{i\in J}X_i$ if $|t_1-t_2|<\delta$.
\end{enumerate}

We say $\{X_i\}_{i\in J}$ is {\it strongly Lipschitz homotopy} equivalent to $\{Y_i\}_{i\in J}$ if there
exist Lipschitz maps $g_1:\sqcup_{i\in J}X_i\to \sqcup_{i\in J}Y_i$ and
$g_2:\sqcup_{i\in J}Y_i\to \sqcup_{i\in J}X_i$ such that $g_1g_2$ and $g_2g_1$ are
respectively strongly Lipschitz homotopy equivalent to identity maps.

Define $A_{L,0}(X_i:i\in J)$ to be the subalgebra of $A_L(X_i:i\in J)$ consisting of elements
$\oplus_{i\in J}b_i(t)$ satisfying $b_i(0)=0$ for all $i\in J$.

\begin{lem}[\cite{Yu00}]\label{lipschitz-iso}
If $\{X_i\}_{i\in J}$ is strongly Lipschitz homotopy equivalent to $\{Y_i\}_{i\in J}$
then $K_*(A_{L,0}(X_i:i\in J))$ is isomorphic to $K_*(A_{L,0}(Y_i:i\in J))$.
\end{lem}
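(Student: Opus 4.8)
The plan is to reproduce Yu's strong Lipschitz homotopy argument \cite{Yu97,Yu00} in the $\lp$ setting. The statement reduces to two facts: \emph{(A)} every Lipschitz map $g:\bigsqcup_{i\in J}X_i\to\bigsqcup_{i\in J}Y_i$ induces a homomorphism $g_*:K_*(A_{L,0}(X_i:i\in J))\to K_*(A_{L,0}(Y_i:i\in J))$, with identity maps inducing the identity; and \emph{(B)} strongly Lipschitz homotopic maps induce the same map on $K$-theory. Granting these, the hypothesis furnishes Lipschitz maps $g_1:\bigsqcup_i X_i\to\bigsqcup_i Y_i$ and $g_2:\bigsqcup_i Y_i\to\bigsqcup_i X_i$ with $g_2g_1$ and $g_1g_2$ strongly Lipschitz homotopic to the respective identities, so that $(g_2)_*(g_1)_*=\mathrm{id}$ and $(g_1)_*(g_2)_*=\mathrm{id}$, whence $(g_1)_*$ is the required isomorphism. (Strict functoriality $(g'g)_*=g'_*g_*$ is not needed, and in fact only holds up to the equivalence supplied by \emph{(B)}.)

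For \emph{(A)}, observe that $g$ acts only on the Rips-complex coordinate, so the tensor factors $\cA_{O_i}$ are untouched and it suffices to build, uniformly in $i$, a homomorphism $\clp(X_i)\to\clp(Y_i)$ implementing $g|_{X_i}$. This is done with an $\lp$ covering isometry. Fix compatible countable dense subsets $Z_{X_i}\subset X_i$, $Z_{Y_i}\subset Y_i$. For each $y\in Z_{Y_i}$ the index set $g^{-1}(y)\cap Z_{X_i}$ is countable, so $\lp(g^{-1}(y)\cap Z_{X_i},\lp)$ admits an $\lp$-isometric embedding onto a $1$-complemented subspace of the $y$-th coordinate copy of $\lp$ inside $\lp(Z_{Y_i},\lp)$ (both being $\lp$ of a countable set); assembling over $y$ gives an $\lp$-isometry $V_i:\lp(Z_{X_i},\lp)\to\lp(Z_{Y_i},\lp)$ with $\|V_i\|=1$ together with a coordinate left inverse $W_i$ with $W_iV_i=I$ and $\|W_i\|\le1$. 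Since $W_iV_i=I$, the assignment $T\mapsto V_iTW_i$ is multiplicative, hence an algebra homomorphism; it multiplies propagation by at most the Lipschitz constant $c$ of $g$ (because $d(g(x),g(x'))\le c\,d(x,x')$), and it preserves local compactness, boundedness and norm-continuity in $t$, uniformly in $i$. Applying this fibrewise in $t$ and summing over $i\in J$, and noting that $b_i(0)=0$ forces $(V_ib_iW_i)(0)=0$ while $c(t)\to0$ gives $c\,c(t)\to0$, we obtain a homomorphism $A_{L,0}(X_i:i\in J)\to A_{L,0}(Y_i:i\in J)$ and thus $g_*$; the identity map visibly yields the identity.

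For \emph{(B)}, let $F:[0,1]\times\bigsqcup_i X_i\to\bigsqcup_i Y_i$ be a strong Lipschitz homotopy from $g_0$ to $g_1$. By equicontinuity in $t$, and since every slice $F(t,\cdot)$ is $c$-Lipschitz, choose $0=t_0<\cdots<t_N=1$ with $\sup_x d(F(t_j,x),F(t_{j+1},x))<\epsilon$ for all $j$, where $\epsilon$ is a threshold depending only on $r_0$, $c$ and the geometry of the simplices of $\pdg$, hence uniform in $i\in J$ by the uniform boundedness $\mathrm{diameter}(X_i),\mathrm{diameter}(Y_i)\le r_0$. It then suffices to show that two $c$-Lipschitz maps $g,g'$ with $\sup_x d(g(x),g'(x))<\epsilon$ induce the same map on $K_*(A_{L,0})$: using the convexity of the simplices and of the sets $Y_i$ (metric balls, simplices, or points for the $\ell^1$ metric are all convex), one joins $g$ to $g'$ by a norm-continuous path of $c$-Lipschitz maps staying within uniformly bounded sets — after the harmless standard perturbation making the maps take the chosen dense sets into themselves \cite{Yu97,Yu00} — and along this path the covering isometries, hence the associated homomorphisms of $A_{L,0}$, vary norm-continuously, so they agree on $K$-theory. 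Concatenating over $j=0,\dots,N-1$ gives $(g_0)_*=(g_1)_*$.

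The main obstacle is the $\lp$ adaptation of the covering-isometry device: for $p\ne2$ there is no adjoint, so the Hilbert-space trick $T\mapsto VTV^*$ is unavailable; the remedy is to use a genuine one-sided inverse with $W_iV_i=I$, which keeps $T\mapsto V_iTW_i$ multiplicative, while taking care that $\|V_i\|,\|W_i\|$ and the constants in all propagation and continuity estimates are bounded uniformly over $i\in J$ and over $t$, the factor $c$ being the only new contribution. The secondary point needing attention is that the threshold $\epsilon$ in \emph{(B)} and all the homotopies of covering isometries must likewise be chosen uniformly in $i\in J$; this is precisely what the uniform boundedness of the families $\{X_i\}_{i\in J}$ and $\{Y_i\}_{i\in J}$ guarantees.
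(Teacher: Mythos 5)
The paper offers no proof of this lemma, only a citation to \cite{Yu00}; what is at stake is therefore the $\ell^p$ adaptation of Yu's strong Lipschitz homotopy invariance argument. Your split into (A) and (B) is the correct shape, and part (A) is sound: replacing the Hilbert-space device $T\mapsto V_g T V_g^*$ by $T\mapsto V_i T W_i$ with $W_iV_i=I$ is exactly the right move for $p\neq 2$, multiplicativity is automatic, and you keep the uniform bounds $\|V_i\|,\|W_i\|\le 1$ and the propagation factor $c$ under control across $i\in J$.

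Part (B) has a genuine gap: the covering isometries do \emph{not} vary norm-continuously along a path of maps. If two $c$-Lipschitz maps $g,g'$ with $\sup_x d(g(x),g'(x))<\varepsilon$ send some $x$ to distinct points $y\neq y'$ of the chosen countable dense set, then the corresponding $V,V'$ satisfy $\|(V-V')(\delta_x\otimes\xi)\|_p=2^{1/p}\|\xi\|_p$, so $\|V-V'\|\geq 2^{1/p}$ no matter how small $\varepsilon$ is; moreover $T\mapsto VTW$ and $T\mapsto V'TW'$ send a rank-one operator supported at $(x,x)$ to disjointly supported positions, so the induced homomorphisms also fail to be norm-close. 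The mechanism in \cite{Yu97,Yu00} is different: one amplifies to $2\times 2$ matrices, chooses $0=s_0<s_1<\cdots\to 1$ together with times $t_n\to\infty$ so that $\sup_x d(F(s_n,x),F(s_{n+1},x))$ is dominated by the decaying propagation of $b_i(t)$ for $t\geq t_n$, picks covering isometries $V_n$ for the slices $F(s_n,\cdot)$ placed alternately in the two corners, and joins $V_n$ to $V_{n+1}$ by explicit rotations $R(\theta)$; the cross terms $V_n\, b_i(t)\, W_{n+1}$ stay in the localization algebra precisely because the propagation decay is synchronized with the homotopy rate. The rotations $R(\theta)$ are bounded with uniformly bounded inverses on $\lp\oplus_p\lp$, so this step does survive the passage to $p\neq 2$, but it must be stated and estimated; the one-line appeal to ``norm-continuity of covering isometries'' does not substitute for it.
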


Let $e$ be the evaluation homomorphism from $A_L(X_i:i\in J)$  to $A(X_i:i\in J)$ given by
$\oplus_{i\in J} \; g_i(t) \mapsto \oplus_{i\in J}g_i(0)$.

\begin{lem}[\cite{Yu00}]\label{family-iso}
Let $\{\gamma_i\}_{i\in J}$ be as above, i.e.,
$O_i\subseteq B(f(\gamma_i), r)\subset M$ for each $i$. If $\{\Delta_i\}_{i\in
J}$ is a family of simplices in $\pdg$ for some $d>0$ such that
$\gamma_i\in \Delta_i$ for all $i\in J$, then
$$ e_*:K_*(A_L(\Delta_i:i\in J))\to K_*(A(\Delta_i:i\in J)) $$
is an isomorphism.
\end{lem}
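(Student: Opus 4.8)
The plan is to follow Yu's treatment of $C^*$-localization algebras of simplices \cite{Yu00}, replacing every ingredient by its $\ell^p$-analogue; since the only analytic inputs — the six-term exact sequence in $K$-theory, homotopy invariance of $K$-theory, and an Eilenberg swindle — are all available for Banach algebras, the adaptation is essentially formal. Concretely, I would proceed in three steps: (i) produce a short exact sequence of Banach algebras
$$
0\longrightarrow A_{L,0}(\Delta_i:i\in J)\longrightarrow A_L(\Delta_i:i\in J)\overset{e}{\longrightarrow}A(\Delta_i:i\in J)\longrightarrow 0,
$$
so that, by the six-term exact sequence, $e_*$ is an isomorphism once $K_*(A_{L,0}(\Delta_i:i\in J))=0$; (ii) invoke Lemma~\ref{lipschitz-iso} to replace $\{\Delta_i\}_{i\in J}$ by the family of single points $\{\{\gamma_i\}\}_{i\in J}$, reducing that vanishing to $K_*(A_{L,0}(\{\gamma_i\}:i\in J))=0$; (iii) establish this last vanishing by an Eilenberg swindle.

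For step (ii) the required strong Lipschitz homotopy equivalence is explicit: let $g_1\colon\bigsqcup_i\Delta_i\to\bigsqcup_i\{\gamma_i\}$ collapse each $\Delta_i$ onto its distinguished vertex $\gamma_i$ and let $g_2$ be the inclusion, so $g_1g_2=\mathrm{id}$, while $g_2g_1$ is strongly Lipschitz homotopic to the identity through the straight-line homotopy $F(t,x)=(1-t)x+t\gamma_i$ for $x\in\Delta_i$, which is well defined since each $\Delta_i$ is convex and contains $\gamma_i$. In the $\ell^1$ metric on $\pdg$ one has $d(F(t,x),F(t,y))=(1-t)\,d(x,y)\le d(x,y)$, so $F$ is $1$-Lipschitz in the space variable uniformly in $i$ and $t$; and $d(F(t_1,x),F(t_2,x))=|t_1-t_2|\,d(x,\gamma_i)\le 2|t_1-t_2|$, since any simplex of $\pdg$ has $\ell^1$-diameter at most $2$, so $F$ is equicontinuous in $t$, uniformly in $i$. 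Hence $\{\Delta_i\}_{i\in J}$ and $\{\{\gamma_i\}\}_{i\in J}$ are strongly Lipschitz homotopy equivalent (both families being uniformly bounded closed subsets of $\pdg$ containing the respective $\gamma_i$), and Lemma~\ref{lipschitz-iso} gives $K_*(A_{L,0}(\Delta_i:i\in J))\cong K_*(A_{L,0}(\{\gamma_i\}:i\in J))$.

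Step (i) is where care is needed, and I expect it to be the main obstacle. Surjectivity of $e$ comes from pushing operators forward along the radial contractions $F_t\colon\Delta_i\to\Delta_i$, $F_t(x)=e^{-t}x+(1-e^{-t})\gamma_i$: given $\bigoplus_iT_i\in A(\Delta_i:i\in J)$, the pushed-forward family $b_i(t)$ satisfies $b_i(0)=T_i$, and since every operator in $\cp(\Delta_i)$ automatically has propagation at most $\operatorname{diam}(\Delta_i)\le 2$, the propagation of $b_i(t)$ is at most $2e^{-t}$, hence tends to $0$ uniformly in $i$; a routine check shows $t\mapsto b_i(t)$ is norm-continuous with a modulus independent of $i$, so $\bigoplus_ib_i\in A_L(\Delta_i:i\in J)$ and $e(\bigoplus_ib_i)=\bigoplus_iT_i$. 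The one subtlety, that $F_t$ need not carry the countable dense set $\Gamma_d$ into itself, is handled exactly as for $C^*$-localization algebras, by realizing the pushforward through the standard partition-of-unity device on the simplices. Finally, for step (iii), the point $\{\gamma_i\}$ imposes no propagation constraint, so $A_{L,0}(\{\gamma_i\}:i\in J)$ consists of norm-bounded, equi-uniformly-continuous families of functions $\mathbb{R}_+\to\calk_p\otimes_p\cA_{O_i}$ vanishing at $0$, and the Eilenberg swindle that proves the corresponding $C^*$-statement $K_*(C^*_{L,0}(\mathrm{pt}))=0$ — absorbing infinitely many copies of a given class along $\mathbb{R}_+$ — is purely formal and transcribes verbatim, the $\ell^p$-coefficients playing no role. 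Assembling (i)--(iii) yields the isomorphism $e_*\colon K_*(A_L(\Delta_i:i\in J))\to K_*(A(\Delta_i:i\in J))$.
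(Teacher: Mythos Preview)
Your proposal is correct and follows essentially the same approach as the paper: the paper's proof is the terse three-line version of exactly your steps (i)--(iii), invoking the six-term exact sequence, the strong Lipschitz homotopy equivalence between $\{\Delta_i\}$ and $\{\{\gamma_i\}\}$ together with Lemma~\ref{lipschitz-iso}, and an Eilenberg swindle to kill $K_*(A_{L,0}(\{\gamma_i\}:i\in J))$. Your write-up simply fills in details the paper omits, in particular the explicit straight-line homotopy and the surjectivity of $e$.
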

\begin{proof} (\cite{Yu00}){\bf .}
Note that $\{\Delta_i\}_{i\in J}$ is strongly Lipschitz homotopy equivalent to
$\{\gamma_i\}_{i\in J}$. By an argument of Eilenberg swindle,  we have
$K_*(A_{L,0}(\{\gamma_i\}:i\in J))=0$. Consequently, Lemma \ref{family-iso} follows from Lemma \ref{lipschitz-iso} and the six term exact sequence of Banach algebra $K$-theory.
\end{proof}

We are now ready to give a proof to Lemma \ref{union-iso} of the previous section.

\begin{proof}[Proof of Lemma \ref{union-iso}, \cite{Yu00}]
By Lemma \ref{decomposition} we have the following commuting diagram
\[
\xymatrix{
  \displaystyle\lim_{d\to\infty}\clspace_{\sqcup_{i\in J} O_i}           \ar[d]_{\cong}          \ar[r]^{e}
            &    \displaystyle\lim_{d\to\infty}\cspace_{\sqcup_{i\in J} O_i}          \ar[d]^{\cong}                   \\
  \displaystyle\lim_{s\to\infty}A_L(\Delta_i{(s)}_i: \; i\in J)           \ar[r]^{e}
            &    \displaystyle\lim_{s\to\infty}A(\Delta_i{(s)}_i: \; i\in J)
     }
\]
which induces the following commuting diagram at $K$-theory level
\[
\xymatrix{
  \displaystyle\lim_{d\to\infty} K_*\Big(\clspace_{\sqcup_{i\in J} O_i}\Big)           \ar[d]_{\cong}          \ar[r]^{e_*}
            &    \displaystyle\lim_{d\to\infty} K_*\Big(\cspace_{\sqcup_{i\in J} O_i}\Big)          \ar[d]^{\cong}                   \\
  \displaystyle\lim_{s\to\infty} K_*\Big(A_L(\Delta_i{(s)}: \; i\in J)\Big)           \ar[r]^{e_*}
            &    \displaystyle\lim_{s\to\infty} K_*\Big(A(\Delta_i{(s)}: \; i\in J)\Big) .
     }
\]
Now Lemma \ref{union-iso} follows from Lemma \ref{family-iso}.
\end{proof}

\vskip 5mm

\section{Almost flat Bott elements and Bott  maps}
In this section, we shall construct uniformly almost flat Bott generators for a simply connected complete Riemannian
manifold of nonpositive sectional curvature, and define a Bott map from the $K$-theory of the $\ell^p$-Roe
algebra to the $K$-theory of thetwisted $\ell^p$-Roe algebra and another Bott map between the
$K$-theory of corresponding $\ell^p$-localization algebras.
We show that the Bott map from the $K$-theory of the $\lp$-localization algebra to the $K$-theory of the twisted
$\lp$-localization algebra is an isomorphism (Theorem \ref{twisted-bott}).
%

Let $M$ be a simply connected complete Riemannian manifold of nonpositive sectional curvature.
As remarked at the beginning of Section 4, without loss of generality, we assume in the following
$\dim (M)=2n$ for some integer $n>0$. Recall that $\cA=C_0(M,\Cliff_{\bbc}(TM))$ is the $C^*$-algebra of
continuous sections of the complex Clifford algebra bundle
$\Cliff_{\bbc}(TM)$ of the tangent bundle of $M$ vanishing at infinity.
Let $\cB:=C_b(M, \cliff_{\bbc}(TM))$ be the $C^*$-algebra of all continuous bounded sections of $\Cliff_{\bbc}(TM)$.

We can consider $\cA=C_0(M,\Cliff_{\bbc}(TM))$ and $\cB=C_b(M, \cliff_{\bbc}(TM))$ as $L^p$-operator algebras, acting on
$L^p(M, \Cliff_{\bbc}(TM))$, the $L^p$-space of the locally measurable sections of the Hilbert space bundle $\Cliff_{\bbc}(TM)$. Since
$\cA$ and $\cB$ act on $L^p(M, \Cliff_{\bbc}(TM))$ by point-wise multiplication, both algebras have equivalent norms for different
$p\in (1, \infty)$. Hence the $K$-theory of both $\cA$ and $\cB$ does not depend on $p$. In particular, the Bott element is still
a generator for $K_0(\cA)$, if $\cA$ is viewed as an $L^p$-operator algebra.

Let $x\in M$. For any $z\in M$, let $\sigma: [0, 1]\to M$ be the unique geodesic such that
$$\sigma(0)=x,\qquad\sigma(1)=z.$$
Let $v_x(z):=\frac{\sigma'(1)}{\|{\sigma'(1)}\|} \in T_z M$.  For any $c>0$, take a continuous function
$\phi_{x,c}:M\to [0,1]$ satisfying
\begin{equation}
\phi_{x,c}(z)=\left\{
\begin{array}{ccc}
0, & \mbox{if}\ d(x,z)\leq \frac{c}{2};  \\
1, & \mbox{if}\ d(x,z)\geq c.
\end{array}\right.
\end{equation}
For any $z\in M$, let
$$ f_{x,c}(z):=\phi_{x,c}(z)\cdot v_x(z)\in T_zM. $$
Then $f_{x,c}\in \calb$. The following result describes certain ``uniform almost flatness" of
the functions $f_{x, c}$ ($x\in M$, $c>0$).

\begin{lem}\label{flat}For any $R>0$ and $\varepsilon>0$, there exist a constant $c>0$ and a family of continuous
functions $\{\phi_{x,c}\}_{x\in M}$  satisfying the above condition (1) such that, if $d(x,y)<R$, then
$$\sup_{z\in M}\|{f_{x,c}(z)-f_{y,c}(z)}\|_{T_zM}< \varepsilon. $$
\end{lem}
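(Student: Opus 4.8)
\textbf{Proof proposal for Lemma \ref{flat}.}

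The plan is to reduce the estimate to two separate sources of variation: the variation of the ``direction field'' $v_x(z)$ as the basepoint $x$ moves, and the variation of the cutoff $\phi_{x,c}$. First I would fix $R>0$ and $\varepsilon>0$ and analyze the unit tangent vectors $v_x(z)$ and $v_y(z)$ for $d(x,y)<R$. Here the nonpositive curvature hypothesis enters decisively: since $M$ is simply connected, complete, and nonpositively curved (a Hadamard manifold), for each pair of points $z,w$ the geodesic joining them is unique, and the function $(x,z)\mapsto v_x(z)$ is continuous where defined (i.e. for $x\neq z$). The key quantitative input is convexity of the distance function along geodesics (the CAT$(0)$ inequality): the geodesics $\sigma_x$ from $x$ to $z$ and $\sigma_y$ from $y$ to $z$ share the endpoint $z$, and by comparison with Euclidean space the angle they make at $z$ — equivalently $\|v_x(z)-v_y(z)\|_{T_zM}$ — is controlled by $d(x,y)/d(z,\{x,y\})$, so it becomes small, uniformly in $z$ and in the basepoints, once $d(z,x)$ and $d(z,y)$ are large compared with $R/\varepsilon$. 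More precisely, outside a ball of radius $\rho=\rho(R,\varepsilon)$ around $x$ one has $\|v_x(z)-v_y(z)\|<\varepsilon/2$.

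Next I would choose the constant $c$ and the cutoff family. Take $c$ large enough that $c/2>\rho(R,\varepsilon)$, so that on the region $\{d(x,z)\le c\}$ where $\phi_{x,c}$ is allowed to be nonconstant we are \emph{not} yet in the good region for the direction field; instead, on that region we exploit that $\phi_{x,c}$ can be taken small. Concretely, build $\phi_{x,c}(z)=\psi(d(x,z))$ from a single fixed $1$-Lipschitz (or at least uniformly continuous) profile function $\psi:\mathbb R_+\to[0,1]$ with $\psi=0$ on $[0,c/2]$ and $\psi=1$ on $[c,\infty)$, the \emph{same} $\psi$ for every $x\in M$; this makes the family $\{\phi_{x,c}\}_{x\in M}$ genuinely uniform. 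Then $\|f_{x,c}(z)-f_{y,c}(z)\|_{T_zM}\le |\phi_{x,c}(z)-\phi_{y,c}(z)|+\min\{\phi_{x,c}(z),\phi_{y,c}(z)\}\cdot\|v_x(z)-v_y(z)\|$, using that $v_x(z),v_y(z)$ are unit vectors. The first term is $\le |\psi(d(x,z))-\psi(d(y,z))|$, which by uniform continuity of $\psi$ and $|d(x,z)-d(y,z)|\le d(x,y)<R$ can be made $<\varepsilon/2$ by choosing $\psi$ to vary slowly, i.e. by enlarging $c$ if necessary (stretching the transition region over an interval of length $\gg R/\varepsilon$). For the second term, split into cases: if $d(x,z)\le c$ or $d(y,z)\le c$ then at least one of $\phi_{x,c}(z),\phi_{y,c}(z)$ could be as large as $1$, but then the other point is within $c+R$ of $z$, so $d(z,\{x,y\})$ may be small and the direction estimate fails — however in this regime I instead note both cutoffs are controlled: if $d(x,z)\le c/2+R$ say, one of them vanishes or is small; the genuinely delicate overlap is $d(x,z)\in[c/2,c+R]$, and there one uses that $\min\{\phi_{x,c}(z),\phi_{y,c}(z)\}\le\psi(c/2+R)$ wait — this needs care, so the cleanest route is: if $d(x,z)\ge c$ (hence $\ge c/2>\rho$) then by the direction estimate the second term is $<\varepsilon/2$ regardless of the coefficient; if $d(x,z)<c$ then $d(x,z)<c$ forces... here I would simply arrange the two small regions to be nested by an even larger choice of $c$, namely $c/2>\rho(R,\varepsilon)+R$, so that $d(x,z)\le c/2$ (where $\phi_{x,c}(z)=0$) covers the bad region for directions up to a margin $R$, ensuring that wherever $\phi_{x,c}(z)\neq 0$ or $\phi_{y,c}(z)\neq 0$ we already have $d(x,z)>\rho$ and $d(y,z)>\rho$, so $\|v_x(z)-v_y(z)\|<\varepsilon/2$ there, while where both cutoffs vanish the difference $f_{x,c}(z)-f_{y,c}(z)$ is $0$. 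Combining, $\|f_{x,c}(z)-f_{y,c}(z)\|<\varepsilon$ for all $z$.

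The main obstacle I expect is making the curvature comparison estimate on $\|v_x(z)-v_y(z)\|$ fully uniform and explicit — that is, producing the function $\rho(R,\varepsilon)$. This is where I would invoke the Hadamard/CAT$(0)$ structure: the squared distance function on $M\times M$ is convex along geodesics, geodesics vary smoothly with endpoints (the exponential map is a diffeomorphism), and the Euclidean comparison triangle with vertices $\bar x,\bar y,\bar z$ has the property that the angle at $\bar z$ dominates the comparison angle at $z$; since in Euclidean space the angle at $\bar z$ subtended by a segment $\bar x\bar y$ of length $<R$ from distance $\ge\rho$ is $O(R/\rho)$, choosing $\rho=2R/\varepsilon$ (or similar) suffices. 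Everything else — the reduction, the choice of the common profile $\psi$, the case analysis — is routine once this uniform angle bound is in hand, and the continuity of $z\mapsto f_{x,c}(z)$ at $z=x$ follows because $\phi_{x,c}$ vanishes near $x$.
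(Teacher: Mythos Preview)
Your proposal is correct and follows essentially the same approach as the paper: choose a common radial profile $\psi$ so that $\phi_{x,c}(z)=\psi(d(x,z))$, and use the Hadamard/CAT$(0)$ comparison (in the paper, phrased as the distance-nonincreasing property of $\exp_z^{-1}$, with an auxiliary point $y'$ on the geodesic from $y$ to $z$ to equalize radii) to bound $\|v_x(z)-v_y(z)\|$ by roughly $R/d(x,z)$. The paper is terser---it takes $c=2R/\varepsilon$ with the piecewise-linear $\psi$ and only writes out the far-field case $d(x,z),d(y,z)>c$, leaving the remaining regions to the reader---while your case analysis and choice of $c/2>\rho+R$ make the near-field argument explicit.
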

\begin{proof}Let $c=\frac{2R}{\varepsilon}$. For any $x\in M$, define $\phi_{x,c}:M\to [0,1]$ by
\[
\phi_{x,c}(z)=\left\{
\begin{array}{ll}
0, & \mbox{ if }  d(x,z)\leq \frac{R}{\varepsilon};        \\
\frac{\varepsilon}{R}d(x, z)-1,   &  \mbox{ if }  \frac{R}{\varepsilon}\leq d(x,z)\leq \frac{2R}{\varepsilon};  \\
1, & \mbox{ if }  d(x,z)\geq \frac{2R}{\varepsilon}.
\end{array}\right.
\]
Let $x,y\in M$ such that $d(x,y)<R$. Then we have several cases for the position of $z\in M$ with respect to
$x, y$.
\par
Consider the case where $d(x,z)>c=\frac{2R}{\varepsilon}$ and $d(y,z)>c=\frac{2R}{\varepsilon}$.
Since $\phi_{x,c}(z)=\phi_{y,c}(z)=1$, we have
$$f_{x,c}(z)-f_{y,c}(z)=v_x(z)-v_y(z).$$
Without loss of generality, assume $d(x, z)\leq d(y, z)$. Then there exists a unique point $y'$ on the unique
geodesic connecting $y$ and $z$ such that $d(y', z)=d(x, z)$. Then $d(y', y)<R$ since $d(x, y)<R$, so that
$d(x, y')<2R$.
\par
Let $\exp^{-1}_z:M\to T_zM$ denote the inverse of the exponential map
$$\exp_z: T_zM\to M $$
at $z\in M$. Then we have
\\ \indent ($\alpha$) $\|\exp^{-1}_z(x)\|=d(x,z)=d(y',z)=\|\exp^{-1}_z(y')\|>c=\frac{2R}{\varepsilon}$;
\\ \indent ($\beta$)  $\|\exp^{-1}_z(x)-\exp^{-1}_z(y')\|\leq d(x,y')<2R$, since $M$ has nonpositive sectional curvature;
\\ \indent ($\gamma$) $v_x(z)=-\frac{\exp^{-1}_z(x)}{\|{\exp^{-1}_z(x)}\|}$
                    and $v_y(z)=-\frac{\exp^{-1}_z(y')}{\|{\exp^{-1}_z(y')}\|}$.
\\
Hence, for any $z\in M$, we have
$$\|{f_{x,c}(z)-f_{y,c}(z)}\| = \|{v_x(z)-v_y(z)}\| <2R/(2R/\varepsilon)=\varepsilon  $$
whenever $d(x, y)<R$. Similarly, we can check the inequality in other cases where $z\in M$ satisfies either
$d(x,z)\leq c$ or $d(y,z)\leq c$.
\end{proof}

Now let's consider the short exact sequence
$$
0\longrightarrow \cA\longrightarrow\cBp \stackrel{\pi}{\longrightarrow} \cBp/\cA \longrightarrow0,
$$
where $\cA=C_0(M,\cliff_{\bbc}(TM))$ and $\cBp=C_b(M,\cliff_{\bbc}(TM))$.
For any $f_{x, c}$ ($x\in M$, $c>0$) constructed above, it is easy to see that
$[f_{x,c}]:=\pi(f_{x,c})$ is invertible in $\cBp/\cA$ with its inverse $[-f_{x,c}]$. Thus
$[f_{x,c}]$ defines an element in $K_1(\cBp/\cA)$. With the help of the index map
$$
\partial: K_1(\cBp/\cA)\to K_0(\cA),
$$
we obtain an element $\partial([f_{x,c}])$ in
$$
K_0(\cA)=K_0\Big( C_0(M,\cliff_{\bbc}(TM)) \Big) \cong K_0 \Big( C_0(\mathbb{R}^{2n})\otimes \cM_{2^n}(\mathbb{C}) \Big)
        \cong\mathbb{Z}.
        $$
It follows from the construction of $f_{x,c}$ that, for every $x\in M$ and $c>0$, $\partial([f_{x,c}])$ is just the Bott generator of $K_0(\cA)$.

The element $\partial([f_{x,c}])$ can be expressed explicitly as follows. Let
\begin{align*}
W_{x,c}&=\matr{1}{f_{x,c}}{0}{1}\matr{1}{0}{f_{x,c}}{1}\matr{1}{f_{x,c}}{0}{1}\matr{0}{-1}{1}{0} \; ,\\
b_{x,c}&=W_{x,c}\matr{1}{0}{0}{0}W_{x,c}^{-1} \; ,\\
b_0&=\matr{1}{0}{0}{0}\; .
\end{align*}

Then both $b_{x,c}$ and $b_0$ are idempotents in $\cM_2(\cA^+)$, where $\cA^+$ is the algebra jointing a unit
to $\cA$.  It is easy to check that
$$
b_{x,c}-b_0\in C_c(M,\cliff_{\bbc}(TM))\otimes \cM_2(\mathbb{C}),
$$
the algebra of $2\times 2$ matrices  of compactly supported continuous functions, with
$$ \supp(b_{x,c}-b_0)\subset B_M(x,c):=\{z\in M:d(x,z)\leq c\}, $$
where for a matrix $a=\matr{a_{11}}{a_{12}}{a_{21}}{a_{22}}$ of functions on $M$ we define the support of $a$ by
$$\supp (a)= \bigcup^2_{i, j=1} \supp(a_{i, j}).$$
Now we have the explicit expression
$$ \partial([f_{x,c}])=[b_{x,c}]-[b_0]\in K_0(\cA). $$

\begin{lem}[Uniform almost flatness of the Bott generators]\label{bottflat}
The family of idempotents $\{b_{x,c}\}_{x\in M,c>0}$ in $\cM_2(\cA^+)=C_0(M,\cliff_{\bbc}(TM))^+\otimes \cM_2(\mathbb{C})$
constructed above are uniformly almost flat in the following sense:
\\ \indent for any $R>0$ and $\varepsilon>0$, there exist $c>0$ and a family of continuous functions
$\Big\{ \phi_{x,c}: \; M\to [0,1] \Big\}_{x\in M}$ such that, whenever $d(x,y)<R$, we have
$$ \sup_{z\in M} \|{b_{x,c}(z)-b_{y,c}(z)}\|_{\cliff_\bbc(T_z M)\otimes \cM_2(\mathbb{C})}<\varepsilon, $$
where $b_{x, c}$ is defined via $W_{x, c}$ and $f_{x, c}=\phi_{x, c} v_x$ as above, and
$\cliff_\bbc(T_z M)$ is the complexified Clifford algebra of the tangent space $T_z M$.
\end{lem}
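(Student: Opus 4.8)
The plan is to observe that each idempotent $b_{x,c}$ is produced from the single element $f_{x,c}\in\cBp$ by one fixed finite algebraic recipe that is the same for every $x$ and $c$, so that the uniform estimate on $\|f_{x,c}-f_{y,c}\|$ supplied by Lemma \ref{flat} propagates, with only a universal constant lost, to the required uniform estimate on $\|b_{x,c}-b_{y,c}\|$. In this way the geometry of $M$ (hence the nonpositive curvature hypothesis) enters only through Lemma \ref{flat}, and everything after that is Lipschitz bookkeeping.

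Concretely, I would first record the pointwise bound $\|f_{x,c}(z)\|_{\cliff_\bbc(T_zM)}=\phi_{x,c}(z)\,\|v_x(z)\|_{T_zM}\le 1$, valid for all $x$, $c$ and $z$, since $\phi_{x,c}$ takes values in $[0,1]$ and $v_x(z)$ is a unit tangent vector (whose Clifford norm equals its Riemannian length). Next, I would make the dependence of $b_{x,c}$ on $f_{x,c}$ explicit: each of the four factors in $W_{x,c}$ is an elementary $2\times 2$ matrix whose inverse is again elementary (replace $f_{x,c}$ by $-f_{x,c}$ in the three shear factors, while $\matr{0}{-1}{1}{0}$ has inverse $\matr{0}{1}{-1}{0}$), so multiplying out $W_{x,c}$, its inverse, and $b_{x,c}=W_{x,c}b_0W_{x,c}^{-1}$ shows there is a fixed $2\times 2$ matrix $Q$ of one-variable polynomials with integer coefficients, independent of $x$, $c$ and of $M$, such that $b_{x,c}(z)=Q(f_{x,c}(z))$ for every $z\in M$. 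Since $Q$ is evaluated at a single element of $\cliff_\bbc(T_zM)$, no noncommutativity issue arises.

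Then I would combine the two ingredients. On the closed unit ball of any Banach algebra the map $Q$ is Lipschitz: the telescoping identity $a^{j}-b^{j}=\sum_{i=0}^{j-1}a^{i}(a-b)b^{\,j-1-i}$ gives $\|Q(a)-Q(b)\|\le C\|a-b\|$ whenever $\|a\|,\|b\|\le 1$, where $C$ depends only on the degrees and coefficients of the entries of $Q$ and is therefore a universal constant. Given $R>0$ and $\varepsilon>0$, apply Lemma \ref{flat} with $\varepsilon$ replaced by $\varepsilon/C$ to obtain $c>0$ and a family $\{\phi_{x,c}\}_{x\in M}$ (the family provided by that lemma satisfies the required normalization) with $\sup_{z\in M}\|f_{x,c}(z)-f_{y,c}(z)\|<\varepsilon/C$ whenever $d(x,y)<R$. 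Since $\|f_{x,c}(z)\|\le 1$ for all $z$, the Lipschitz estimate then yields
$$
\sup_{z\in M}\|b_{x,c}(z)-b_{y,c}(z)\|_{\cliff_\bbc(T_zM)\otimes\cM_2(\bbc)}\le C\,\sup_{z\in M}\|f_{x,c}(z)-f_{y,c}(z)\|<\varepsilon,
$$
which is exactly the assertion of Lemma \ref{bottflat}.

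The only delicate point — which I would flag explicitly — is that the Lipschitz constant $C$ and the unit-ball bound on $f_{x,c}$ must be independent of $x$, of $c$, and of the manifold $M$, since that independence is precisely what the word ``uniform'' refers to. But because the algebraic recipe producing $b_{x,c}$ from $f_{x,c}$ is fixed once and for all and $\|f_{x,c}(z)\|\le 1$ always holds, this independence is automatic, so there is in fact no genuine obstacle beyond having Lemma \ref{flat} in hand.
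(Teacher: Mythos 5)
Your proposal is correct and amounts to exactly the unwinding the paper has in mind when it writes ``Straightforward from Lemma~\ref{flat}.'' The two key observations — that $\|f_{x,c}(z)\|\le 1$ uniformly (since $\phi_{x,c}\in[0,1]$ and $v_x(z)$ is a unit tangent vector, hence a unitary in $\cliff_\bbc(T_zM)$) and that $b_{x,c}(z)$ is obtained from $f_{x,c}(z)$ by a fixed integer-coefficient polynomial recipe that is Lipschitz on the unit ball — are precisely what make the passage from Lemma~\ref{flat} to Lemma~\ref{bottflat} automatic, so your argument and the paper's (implicit) one coincide.
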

\begin{proof} Straightforward from Lemma \ref{flat}.    \end{proof}

It would be convenient to introduce the following notion:
\par
\begin{defn} For $R>0, \varepsilon>0, c>0$, a family of idempotents $\{b_x\}_{x\in M}$ in
$\cM_2(\cA^+)=C_0(M,\cliff_{\bbc}(TM))^+\otimes \cM_2(\mathbb{C})$ is said to be $(R, \varepsilon; c)$-flat if
\begin{enumerate}
\item for any  $x, y\in M$ with $d(x, y)<R$ we have
      $$
      \sup_{z\in M} \|{b_{x}(z)-b_{y}(z)}\|_{\cliff_\bbc(T_z M)\otimes \cM_2(\mathbb{C})}<\varepsilon.
      $$
\item $b_{x}-b_0\in C_c(M,\cliff_{\bbc}(TM))\otimes \cM_2(\mathbb{C})$ and
      $$
      \supp(b_{x}-b_0)\subset B_M(x,c):=\{z\in M:d(x,z)\leq c\}.
      $$
\end{enumerate}
\end{defn}

%
%

{\bf Construction of the Bott map $\beta_*$:}

Now we shall use the above almost flat Bott generators for
$$
K_0(\cA)=K_0\Big(C_0(M,\cliff_{\bbc}(TM))\Big)
$$
 to construct a ``Bott map"
$$
\beta_*: \; K_*(\cp(\pdg))\to K_*(\cspace).
$$

To begin with, we give a representation of $\cp(\pdg)$ on $\lp(\Gamma_d,\lp)$, where $\Gamma_d$ is the
countable dense subset of $\pdg$ as in the definition of $\cspace$.

Let $\calgp(\pdg)$ be the algebra of functions
$$
Q: \; \Gamma_d\times \Gamma_d \to \cK_p
$$
such that
\\ \indent (1) there exists $C>0$ such that $\|Q(x,y)\|\leq C$ for all $x,y\in\Gamma_d$;
\\ \indent (2) there exists $R>0$ such that $Q(x,y)=0$ whenever $d(x,y)>R$;
\\ \indent (3) there exists $L>0$ such that for every $z\in P_d(\Gamma)$, the number of elements
in the following set
$$\{ (x, y)\in\Gamma_d\times \Gamma_d: \; d(x, z)\leq 3R, \; d(y, z)\leq 3R, \; Q(x,y)\neq 0 \}$$
is less than $L$.
\par
The product structure on $\calgp(\pdg)$ is defined by
$$
(Q_1Q_2)(x, y)=\sum_{z\in \Gamma_d} Q_1(x, z) Q_2(z, y).
$$
The algebra $\calgp(\pdg)$ acts on $\lp(\Gamma_d,\lp)$. The operator norm completion
of $\calgp(\pdg)$ with respect to this action is isomorphic to $\cp(\pdg)$ when $\Gamma$ has bounded geometry.

Note that $\cp(\pdg)$ is stable in the sense that $\cp(\pdg)\cong \cp(\pdg)\otimes_p \cM_k(\mathbb{C})$ for all natural
number $k$. Any element in $K_0(\cp(\pdg))$ can be expressed as the difference of the $K_0$-classes of two idempotents
in $\cp(\pdg)$. To define the Bott map
$$\beta_*: \; K_0(\cp(\pdg))\to K_0(\cspace),$$
we need to specify the value
$\beta_*([P])$ in $K_0(\cspace)$ for any idempotent $P\in \cp(\pdg)$.

Now let $P\in \cp(\pdg)\subseteq \cB(\lp(\Gamma_d,\lp))$ be an idempotent. Denote $\norm{P}=N$. For any $0<\varepsilon_1<1/100$,
take an element $Q\in \calgp(\pdg)$ such that
$$
\|P-Q\|< \frac{\varepsilon_1}{2N+2}.
$$
Then $\|Q\|<\|P-Q\|+\|P\|<N+1$, hence
$$
\|Q-Q^2\|\leq\|Q-P\|+\|P\|\|P-Q\|+\|P-Q\|\|Q\|<\varepsilon_1
$$
and there is $R_{\varepsilon_1}>0$ such that $Q(x, y)=0$
whenever $d(x, y)>R_{\varepsilon_1}$.
For any $\varepsilon_2>0$, take by Lemma \ref{bottflat} a family of $(R_{\varepsilon_1}, \varepsilon_2; c)$-flat idempotents
$\{b_x\}_{x\in M}$ in $\cM_2(\cA^+)$ for some $c>0$. Define
$$
\widetilde{Q}, \; \widetilde{Q}_0: \; \Gamma_d\times\Gamma_d \to \cK_p\otimes_p\cA^+  \otimes_p \cM_2(\mathbb{C})
$$
by
$$
\widetilde{Q}(x,y)=Q(x,y)\otimes b_x
$$
and
$$
\widetilde{Q}_0(x,y)=Q(x,y)\otimes b_0,
$$
respectively, for all $(x, y)\in \Gamma_d\times \Gamma_d$, where $b_0=\matr{1}{0}{0}{0}$. Then
$$
\widetilde{Q}, \widetilde{Q}_0 \in \calgp(\pdg, \cA^+\otimes_p \cM_2(\mathbb{C}))
            \cong \calgp(\pdg,\cA^+)\otimes_p \cM_2(\mathbb{C})
$$
and
$$
\widetilde{Q}-\widetilde{Q}_0 \in \calgp(\pdg,\cA)\otimes_p \cM_2(\mathbb{C}).
$$

Since $\Gamma$ has bounded geometry, by the almost flatness of the Bott generators (Lemma \ref{bottflat}), we can choose $\varepsilon_1$ and $\varepsilon_2$
small enough to obtain $\widetilde{Q}, \widetilde{Q}_0$ as constructed above such that $\|\widetilde{Q}^2-\widetilde{Q}\|<1/5$ and $\|\widetilde{Q}_0^2-\widetilde{Q}_0\|<1/5$.

It follows that the spectrum of either $\widetilde{Q}$ or $\widetilde{Q}_0$ is contained in  disjoint
neighborhoods $S_0$ of $0$ and $S_1$ of $1$ in the complex plane. Let $f: S_0\sqcup S_1\to \mathbb{C}$ be the holomorphic
function such that $f(S_0)=\{0\}, f(S_1)=\{1\}$. Let $\Theta=f(\widetilde{Q})$ and
$\Theta_0=f(\widetilde{Q}_0)$. Then $\Theta$ and $\Theta_0$ are idempotents in $\cp(\pdg,\cA^+)\otimes \cM_2(\mathbb{C})$  with
$$
\Theta-\Theta_0\in \cspace\otimes \cM_2(\mathbb{C}).
$$
Note that $\cspace\otimes \cM_2(\mathbb{C})$ is a closed two-sided ideal of $\cp(\pdg,\cA^+)\otimes \cM_2(\mathbb{C})$.

At this point we need to recall the {\em difference construction} in $K$-theory of Banach algebras
introduced by Kasparov-Yu \cite{KY06}. Let $J$ be a closed two-sided ideal of a Banach algebra $B$.
Let $p, q\in B^+$ be idempotents such that $p-q\in J$.
Then a difference element $D(p,q)\in K_0(J)$ associated to the pair $p,q$ is defined as follows. Let
\[
Z(p, q)=\left(
\begin{array}{cccc}
q    &  0   &  1-q  &    0    \\
1-q  &  0   &   0   &    q    \\
0    &  0   &   q   &   1-q   \\
0    &  1   &   0   &    0
\end{array}
\right)
\in \cM_4(B^+).
\]
We have
\[
(Z(p, q))^{-1}=\left(
\begin{array}{cccc}
q    &  1-q &   0   &    0    \\
0    &  0   &   0   &    1    \\
1-q  &  0   &   q   &    0    \\
0    &  q   &  1-q  &    0
\end{array}
\right)
\in \cM_4(B^+).
\]
Define
\[
D_0(p,q)=(Z(p, q))^{-1}\left(
\begin{array}{cccc}
p  &  0  &  0  &  0 \\
0  & 1-q &  0  &  0 \\
0  &  0  &  0  &  0 \\
0  &  0  &  0  &  0
\end{array}
\right)
Z(p, q).
\]
Let
\[
p_1=\left(
\begin{array}{cccc}
1  &  0  &  0  &  0 \\
0  &  0  &  0  &  0 \\
0  &  0  &  0  &  0 \\
0  &  0  &  0  &  0
\end{array}
\right).
\]
Then $D_0(p, q)\in \cM_4(J^+)$ and
$D_0(p, q)=p_1 $ modulo $\cM_4(J)$. We define the difference element
$$
D(p, q):=[D_0(p, q)]-[p_1]
$$
in $K_0(J)$.

Finally, for any idempotent $P\in \cp(\pdg)$  representing an element  $[P]$ in $K_0(\cp(\pdg))$, we define
$$
\beta_*([P])=D(\Theta, \Theta_0)\in K_0(\cspace).
$$
The correspondence $[P]\to \beta_*([P])$ extends to a homomorphism, the Bott map
$$
\beta_*: \; K_0(\cp(\pdg))\to K_0(\cspace).
$$
By using suspension, we similarly define the Bott map
$$\beta_*: \; K_1(\cp(\pdg))\to K_1(\cspace).$$
%

{\bf  Construction of the Bott map $(\beta_L)_*$ :}

Next we shall construct a Bott map for $K$-theory of $\lp$-localization algebras:
$$
(\beta_L)_*: \; K_*(\clp(\pdg))\to K_*(\clspace).
$$

Let $\clalgp(\pdg)$ be the algebra of all bounded, uniformly continuous functions
$$
g: \; \mathbb{R}_+ \to \calgp(\pdg) \subset \cB(\lp(\Gamma_d,\lp))
$$
with the following properties:
\begin{enumerate}
\item there exists a bounded function $R: \mathbb{R}_+\to \mathbb{R}_+$ with
                $\displaystyle\lim_{t\to \infty} R(t)=0$ such that $g(t)(x,y)=0$ whenever $d(x,y)>R(t)$ for every $t$;
\item there exists $L>0$ such that for every $z\in P_d(\Gamma)$, the number of elements
                    in the following set
                    $$\{ (x, y)\in\Gamma_d\times \Gamma_d: \; d(x, z)\leq 3R, \; d(y, z)\leq 3R, \; g(t)(x,y)\neq 0 \}$$
                    is less than $L$ for every $t\in \mathbb{R}_+$.
\end{enumerate}

The $\lp$-localization algebra $\clp(\pdg)$ is isomorphic to the norm completion of \\
$\clalgp(\pdg)$ under the norm
$$
\|g\|_\infty:=\sup_{t\in \mathbb{R}_+} \|g(t)\|
$$
when $\Gamma$ has bounded geometry.  Note that $\clp(\pdg)$ is stable in the sense that
$$\clp(\pdg)\cong \clp(\pdg)\otimes_p \cM_k(\mathbb{C})$$
for all natural number $k$. Hence, any element in
$K_0(\clp(\pdg))$ can be expressed as the difference of the $K_0$-classes of two idempotents in $\clp(\pdg)$.
To define the Bott map
$$(\beta_L)_*: \; K_0(\clp(\pdg))\to K_0(\clspace), $$
we need to specify the value
$(\beta_L)_*([g])$ in $K_0(\clspace)$ for any idempotent $g\in \clp(\pdg)$ representing an element
$[g]\in K_0(\clp(\pdg))$.

Now let $g\in \clp(\pdg)$ be an idempotent with $\norm{g}=N$. For any $0<\varepsilon_1<1/100$,
take an element $h\in \clalgp(\pdg)$ such that
$$
\|g-h\|_\infty< \frac{\varepsilon_1}{2N+2}.
$$
Then $\norm{h-h^2}_\infty<\varepsilon_1$ and there is a bounded function $R_{\varepsilon_1}(t)>0$
with $\displaystyle\lim_{t\to \infty} R_{\varepsilon_1}(t)=0$ such that
$h(t)(x, y)=0$ whenever $d(x, y)>R_{\varepsilon_1}(t)$ for every $t$. Let $\widetilde{R}_{\varepsilon_1}=\sup_{t\in \mathbb{R}_+} R(t)$.
For any $\varepsilon_2>0$, take by Lemma \ref{bottflat} a family of $(\widetilde{R}_{\varepsilon_1}, \varepsilon_2; c)$-flat idempotents
$\{b_x\}_{x\in M}$ in $\cM_2(\cA^+)$ for some $c>0$. Define
$$
\widetilde{h},\; \widetilde{h}_0: \; \mathbb{R}_+\to \calgp(\pdg,\cA^+)\otimes_p \cM_2(\mathbb{C})
$$
by
$$
\Big( \widetilde{h}(t) \Big) (x, y) = \Big( h(t)(x,y) \Big) \otimes_p b_x
                        \in \cK_p \otimes_p \cA^+\otimes_p \cM_2(\mathbb{C}),
$$
$$
\Big( \widetilde{h}_0(t) \Big) (x, y) = \Big( h(t)(x,y) \Big) \otimes_p \scriptsize{\matr{1}{0}{0}{0}}
                        \in \cK_p \otimes_p \cA^+\otimes \cM_2(\mathbb{C})
$$
for each $t\in \mathbb{R}_+$. Then we have
$$
\widetilde{h}, \; \widetilde{h}_0 \; \in \;  \clalgp(\pdg,\cA^+)\otimes_p \cM_2(\mathbb{C})
$$
and
$$
\widetilde{h}-    \widetilde{h}_0 \; \in  \; \clalgp(\pdg,\cA)\otimes_p \cM_2(\mathbb{C}).
$$
Since $\Gamma$ has bounded geometry, by the almost flatness of the Bott generators,
we can choose $\varepsilon_1$ and $\varepsilon_2$
small enough to obtain $\widetilde{h}, \widetilde{h}_0$, as constructed above, such that \\
$\|\widetilde{h}^2-\widetilde{h}\|_\infty<1/5$ and $\|\widetilde{h}_0^2-\widetilde{h}_0\|<1/5$.
The spectrum of either $\widetilde{h}$ or $\widetilde{h}_0$ is contained in disjoint neighborhoods
$S_0$ of $0$ and $S_1$ of $1$ in the complex plane.
Let $f: S_0\sqcup S_1\to \mathbb{C}$ be the
function such that $f(S_0)=\{0\}, f(S_1)=\{1\}$. Let $\eta=f(\widetilde{h})$ and
$\eta_0=f(\widetilde{h}_0)$. Then $\eta$ and $\eta_0$ are idempotents in
$\clp(\pdg,\cA^+)\otimes_p \cM_2(\mathbb{C})$  with
$$
\eta-\eta_0\in \clspace\otimes_p \cM_2(\mathbb{C}).
$$
Thanks to the difference construction, we define
$$
(\beta_L)_*([g])=D(\eta, \eta_0)\in K_0(\clspace).
$$
This correspondence $[g]\mapsto (\beta_L)_*([g])$ extends to a homomorphism, the Bott map
$$
(\beta_L)_*: \; K_0(\clp(\pdg))\to K_0(\clspace).
$$
By suspension, we similarly define
$$
(\beta_L)_*: \; K_1(\clp(\pdg))\to K_1(\clspace).
$$
This completes the construction of the Bott map $(\beta_L)_*$.

It follows from the constructions of $\beta_*$ and $(\beta_L)_*$, we
have the following commuting diagram
\[
\xymatrix{
  K_*(\clp(\pdg)) \ar[d]_{e_*} \ar[r]^{(\beta_L)_*}
                &  K_*(\clspace)   \ar[d]^{e_*} \\
  K_*(\cp(\pdg))   \ar[r]^{\beta_*} & K_*(\cspace)
     }
\]

\begin{thm}\label{twisted-bott} For any $d\geq 0$, the Bott map
$$
(\beta_L)_*: \; K_*(\clp(\pdg))\to K_*(\clspace)
$$
is an isomorphism.
\end{thm}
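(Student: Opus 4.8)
The plan is to prove the isomorphism by a Mayer--Vietoris induction over the Rips complex $\pdg$ --- which is finite dimensional and of bounded geometry since $\Gamma$ is --- that peels $\pdg$ down to its individual vertices, at which point the assertion reduces to the (finite dimensional) Bott periodicity theorem, made uniform across the vertices by the almost flatness of Lemma~\ref{bottflat}. The structural point that makes the induction run is that $\clp(\pdg)$ and $\clspace$ carry compatible Mayer--Vietoris decompositions intertwined by $\beta_L$: since the almost flat Bott idempotent $b_x$ is supported in $B_M(x,c)$, i.e.\ near $f(x)$, cutting the source localization algebra over a subcomplex $Y\subseteq\pdg$ automatically cuts the target twisted algebra over an open neighbourhood of $f(Y)$ in $M$, and $\beta_L$ commutes with the resulting connecting maps. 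For the untwisted source we use the localization-algebra version of the $\ell^p$ coarse Mayer--Vietoris principle of Section~3; all the family algebras and the strong Lipschitz homotopy machinery of Section~5 apply verbatim with the coefficient algebra $\cA_{O_i}$ replaced by $\bbc$, producing untwisted analogues $B(X_i:i\in J)$, $B_L(X_i:i\in J)$, $B_{L,0}(X_i:i\in J)$ of $A(X_i:i\in J)$, $A_L(X_i:i\in J)$, $A_{L,0}(X_i:i\in J)$, for which Lemmas~\ref{lipschitz-iso} and \ref{family-iso} continue to hold.

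\emph{Step 1: reduction to $(\Gamma,r)$-separate families.} Fix $d>0$. Exactly as in the proof of Theorem~\ref{twisted-iso}, bounded geometry of $\Gamma$ and the coarse embedding let us, for each large $r>0$, write $\Gamma=\bigsqcup_{k=1}^{k_0}\Gamma_k$ with $d_M(f(\gamma),f(\gamma'))>2r$ for distinct $\gamma,\gamma'\in\Gamma_k$. Since $f$ is a coarse embedding, $\rho_2(d(\gamma,\gamma'))\geq d_M(f(\gamma),f(\gamma'))>2r$, so the points of each $\Gamma_k$ are definitely separated inside $\pdg$ as well; hence the balls $B_{\pdg}(\gamma,R_0)$ with $\gamma\in\Gamma_k$ are pairwise disjoint and at positive distance, for a fixed $R_0$ large enough that $\{B_{\pdg}(\gamma,R_0):\gamma\in\Gamma\}$ covers $\pdg$. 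Put $Y_k=\bigcup_{\gamma\in\Gamma_k}B_{\pdg}(\gamma,R_0)$ and $O_{r,k}=\bigcup_{\gamma\in\Gamma_k}B_M(f(\gamma),r)$. Then $\pdg=\bigcup_k Y_k$, each $Y_k$ (and each iterated intersection of the $Y_k$) is a disjoint union of uniformly bounded subsets of $\pdg$, each $O_{r,k}$ (and each iterated intersection) is a $(\Gamma,r)$-separate family of open subsets of $M$ in the sense of Definition~\ref{gammarseperate}, and $\beta_L$ carries $\clp(Y_k)\cong\prod_{\gamma\in\Gamma_k}\clp(B_{\pdg}(\gamma,R_0))$ into $\clspace_{O_{r,k}}$ for $r$ large. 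Feeding these decompositions into the two Mayer--Vietoris six-term sequences and applying the five lemma, an induction on the number of pieces $k_0$ reduces the theorem to the following local statement: for any uniformly bounded family $\{X_i\}_{i\in J}$ of subsets of $\pdg$ with $\gamma_i\in X_i$, the family Bott map $B_L(X_i:i\in J)\to A_L(X_i:i\in J)$ is an isomorphism on $K$-theory.

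\emph{Step 2: the local statement.} Each $X_i$ is strongly Lipschitz homotopy equivalent to the point $\{\gamma_i\}$, uniformly in $i\in J$. By Lemma~\ref{lipschitz-iso} and its untwisted analogue, the inclusions $\{\gamma_i\}\hookrightarrow X_i$ induce isomorphisms $K_*(A_{L,0}(\{\gamma_i\}:i\in J))\cong K_*(A_{L,0}(X_i:i\in J))$ and $K_*(B_{L,0}(\{\gamma_i\}:i\in J))\cong K_*(B_{L,0}(X_i:i\in J))$; since the point versions vanish by the Eilenberg swindle used in the proof of Lemma~\ref{family-iso}, the evaluation six-term sequences give $K_*(A_L(X_i:i\in J))\cong K_*(A(X_i:i\in J))=K_*(\prod_i\calk_p\otimes\cA_{O_i})$ and $K_*(B_L(X_i:i\in J))\cong K_*(\prod_i\calk_p)$, compatibly with $\beta_L$ (the Lipschitz homotopies move only the $\pdg$-coordinates and enlarge the supporting balls $B_M(\cdot,c)$ by at most a bounded amount). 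It then remains to check that the induced map $K_*(\prod_i\calk_p)\to K_*(\prod_i\calk_p\otimes\cA_{O_i})$, which on each factor is the difference construction attached to tensoring with $b_{\gamma_i}$, is an isomorphism. For each individual $i$ the exponential map at $f(\gamma_i)$ identifies the ball $B(f(\gamma_i),r)$ with an open subset of $\bbr^{2n}$ and $\cA_{O_i}$ with $C_0(O_i)\otimes\cM_{2^n}(\bbc)$, and by the very construction of the Bott generators $[b_{\gamma_i}]-[b_0]$ is the Bott generator of $K_0(\cA)$, so the $i$-th map is the classical (finite dimensional) Bott periodicity isomorphism. The uniform almost flatness of $\{b_x\}_{x\in M}$ from Lemma~\ref{bottflat} --- which is exactly where the non-positive sectional curvature of $M$ is used, via Lemma~\ref{flat} --- is what guarantees that the difference construction is well defined across the whole family and that these fibrewise isomorphisms assemble into an isomorphism of the product algebras, completing the argument.

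\emph{Main obstacle.} The principal technical burden is Step~1: organizing a single Mayer--Vietoris scheme that simultaneously cuts the untwisted source $\clp(\pdg)$ over subcomplexes and the twisted target $\clspace$ over open subsets of $M$ so that $\beta_L$ is compatible, together with the (otherwise routine but lengthy) verification that $\beta_L$ is natural with respect to the connecting maps of the two sequences; this includes establishing the localization-algebra analogue of the Section~3 Mayer--Vietoris principle and using the coarse-embedding inequalities to convert separation in $M$ into usable separation in $\pdg$. A secondary point requiring care is that all the homotopy-invariance and Eilenberg-swindle arguments of Step~2 must be performed uniformly over the (possibly infinite) index set $J$, which is precisely what the uniform almost flatness of Lemma~\ref{bottflat} makes possible.
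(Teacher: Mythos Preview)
Your proposal is correct in outline and lands on the right endpoint (fibrewise Bott periodicity), but it takes a noticeably different and more elaborate route than the paper's own proof.

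The paper does not invoke the $(\Gamma,r)$-separate machinery, the $M$-ideals $\clspace_{O_{r,k}}$, or the family algebras of Section~5 at all for this theorem. Instead it runs the standard skeleton induction on the finite-dimensional simplicial complex $\pdg$ (as in \cite{Yu97, CW02}): using strong Lipschitz homotopy invariance and Mayer--Vietoris \emph{over subcomplexes of $\pdg$} for both the source $\clp(\,\cdot\,)$ and the target $\clp(\,\cdot\,,\cA)$, one reduces to a $\delta$-separated $0$-dimensional subspace $D\subset\pdg$. There the $K$-theory of both algebras splits as a product over the points of $D$, and $(\beta_L)_*$ becomes the product of the classical Bott isomorphisms $K_*(\calk_p)\to K_*(\calk_p\otimes\cA)$. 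The coefficient algebra $\cA$ is completely passive throughout; no cutting in the $M$-direction is needed.

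What your approach buys is an explicit reuse of the Section~5 apparatus and a very hands-on description of the target at the end of the reduction. What it costs is exactly the ``main obstacle'' you flag: you are running two \emph{different} Mayer--Vietoris schemes in parallel (subcomplexes of $\pdg$ on the source, open subsets of $M$ on the target) and must check that $\beta_L$ intertwines them, including the boundary maps. The paper sidesteps this entirely, since $\beta_L$ is manifestly natural for inclusions of subcomplexes (the Bott idempotent $b_x$ depends only on $x$), so the same subcomplex decomposition works on both sides and the five-lemma applies immediately. In particular, the role you assign to uniform almost flatness in Step~2---making the fibrewise isomorphisms assemble over the infinite product---is already absorbed into the very definition of $(\beta_L)_*$; once $(\beta_L)_*$ exists, the skeleton induction needs nothing further.
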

\begin{proof} Note that $\Gamma$ has bounded geometry, and both the $\lp$-localization algebra and
the twisted $\lp$-localization algebra have strong Lipschitz homotopy invariance at the $K$-theory level.
By a Mayer-Vietoris sequence argument and induction on the dimension of the skeletons \cite{Yu97, CW02}, the general
case can be reduced to the $0$-dimensional case, namely, if $D\subset \pdg$ is a
$\delta$-separated subspace (meaning $d(x,y)\geq \delta$ if $x\neq y\in D$) for some $\delta>0$, then
$$(\beta_L)_*: \; K_*(\clp(D))\to K_*(\clp(D,\cA))$$
is an isomorphism. But this follows from the facts that
$$K_*(\clp(D))\cong \prod_{\gamma\in D} K_*(\clp(\{\gamma\})),$$
$$K_*(\clp(D,\cA))\cong\prod_{\gamma\in D} K_*(\clp(\{\gamma\}, \cA))$$
and that $(\beta_L)_*$ restricts to an isomorphism from $K_*(\clp(\{\gamma\}))\cong K_*(\cK_p)$ to
$$K_*(\clp(\{\gamma\}, \cA)) \cong K_*(\cK_p\otimes \cA)$$
at each $\gamma\in D$ by the classical Bott periodicity.
\end{proof}

\section{Proof of the Main Theorem}
\begin{proof}[Proof of Theorem \ref{main}] We have the commuting diagram
\[
\xymatrixcolsep{3pc}\xymatrix{
            \displaystyle\lim_{d\to\infty}K_*(\clp(P_d(\Gamma))) \ar[d]_{e_*}\ar[r]^{(\beta_L)_*}_{\cong}
            &   \displaystyle\lim_{d\to\infty}K_*(\clspace)  \ar[d]^{e_*}_{\cong}                     \\
\displaystyle\lim_{d\to\infty}K_*(\cp(P_d(\Gamma)))\ar[r]^{\beta_*}
   &       \displaystyle\lim_{d\to\infty}K_*(\cspace).
  }
\]
Hence, $\beta_*\circ e_* =e_*\circ(\beta_L)_*$ It follows from Theorem \ref{twisted-iso}
and Theorem \ref{twisted-bott} that $\beta_*\circ e_*$ is an isomorphism. Consequently, the assembly map
$$
\mu=e_*: \; \displaystyle\lim_{d\to\infty}K_*(\clp(P_d(\Gamma))) \to
            \displaystyle\lim_{d\to\infty}K_*(\cp(P_d(\Gamma)))\cong K_*(\cp(\Gamma))$$
is injective.
\end{proof}

\section*{Acknowledgement}
\mbox{} \quad
The authors are very grateful to the referee for a list of constructive suggestions for corrections, changes or improvements to the original version of this paper.

\end{document}